\DeclareFontFamily{OT1}{manual}{}
\DeclareFontShape{OT1}{manual}{m}{n}{ <10> manfnt }{}
\def\blfootnote{\xdef\@thefnmark{}\@footnotetext}
\newcommand*{\ox}{\otimes}
\DeclarePairedDelimiter\abs{\lvert}{\rvert}
\newcommand*{\sgn}{\mathrm{sgn}}
\newcommand*{\<}{\langle}
\renewcommand*{\>}{\rangle}
\newcommand*{\x}{\times}
\newcommand{\openquat}[3]{(#1,#2)_{#3}}
\newcommand{\leqs}{\leqslant}
\newcommand{\geqs}{\geqslant}
\newtheorem{lemma}{Lemma}[section]
\newtheorem{theorem}[lemma]{Theorem}
\newtheorem{prop}[lemma]{Proposition}
\newtheorem{cor}[lemma]{Corollary}
\newtheorem{question}[lemma]{Question}
\theoremstyle{definition}
\theoremstyle{remark}
\newtheorem{remark}[lemma]{Remark}
\theoremstyle{definition}
\newtheorem{example}[lemma]{Example}
\title{Multiples of Pfister forms}
\author{James O'Shea}
\date{}
\begin{document}

%\newline Zukunftskolleg, Universit\"at Konstanz, 78457 Konstanz, Germany.
%
%\title{ISOTROPY OVER FUNCTION FIELDS OF PFISTER FORMS}
%\author{James O'Shea}
%\address{James O'Shea,\newline Zukunftskolleg, Universit\"at Konstanz,\newline 78457 Konstanz, Germany\newline E-mail: james.oshea@ucd.ie}
%\date{}
%\begin{document}
%
%\maketitle

\maketitle

%AFFILIATION????????

% The isotropy of products of Pfister neighbours is studied. In particular, an improved lower bound on the value of their first Witt index is obtained. This result and certain of its corollaries are applied to the

\begin{abstract}\noindent The isotropy of multiples of Pfister forms is studied. In particular, an improved lower bound on the values of their first Witt indices is obtained. A number of corollaries of this result are outlined. An investigation of generic Pfister multiples is also undertaken. These results are applied to distinguish between properties preserved by Pfister products.
\end{abstract}

\blfootnote{James O'Shea,\\ National University of Ireland Maynooth and University College Dublin.\\ \emph{E-mail}: james.oshea@maths.nuim.ie or james.oshea@ucd.ie}

%\\ \emph{Mathematics Subject Classification}: 11E04, 11E10, 11E25, 11E81, 12F20.\\ \emph{Keywords}: Quadratic forms, Pfister multiples, first Witt index, generic Pfister forms.}

%Keywords: Quadratic forms, Pfister multiples, first Witt index, generic Pfister forms

%Department of Mathematics and Statistics, National University of Ireland Maynooth, Ireland.\\ \emph{Affiliation}: 
\section{Introduction}

Given the centrality of Pfister forms to the theory of quadratic forms, the isotropy of their multiples has been a topic of long-standing interest. A classical result in this regard, established by Elman and Lam in the early seventies, states that, for $\pi$ a Pfister form and $q$ an arbitrary form, the Witt index of their product $\pi\otimes q$ is a multiple of the dimension of $\pi$. Hence, given an anisotropic product over some ground field, one can view this product over a generic field extension that makes it isotropic, thereby obtaining that the first Witt index of $\pi\otimes q$ is a multiple of the dimension of $\pi$, and thus at least the dimension of $\pi$, a bound which is regularly invoked in the literature. 

Somewhat surprisingly, it is possible to say more regarding the first Witt index of Pfister multiples. The main result of this article, Theorem~\ref{i1bound}, states that the first Witt index of $\pi\otimes q$ is at least the first Witt index of $q$ times the dimension of $\pi$, thereby establishing an improved lower bound on the value of the Witt index of the product over every extension that makes it isotropic, and raising the possibility that more can be said regarding the other higher Witt indices of Pfister multiples. Whereas the value of the first Witt index of a Pfister multiple can exceed this bound (see Example~\ref{nocon}), we can establish conditions on a form which ensure that the bound is attained by its Pfister products (Proposition~\ref{pivalues} and Proposition~\ref{i1maxspl}). As a consequence, we can add the maximal-splitting property to the list of form-theoretic properties known to be preserved by Pfister products (Corollary~\ref{maxsplcor}). Certain of these results have been applied and referenced (without proof) in \cite{OS1}.

%Certain of these results have already been applied and referenced in \cite{OS1}.

%under multiplication

%While multiplying a form $q$ by a Pfister form will generally kill off i

While many phenomena are preserved under multiplication by Pfister forms, one should not expect a correspondence between the properties of a form and those of its Pfister multiples. However, it seems reasonable to suggest that such a correspondence may hold with respect to multiplication by ``generic Pfister forms'', those generated by transcendental elements. Section 3 contains a number of results which support this view, with Proposition~\ref{pntrans}, for example, establishing that a form is a Pfister neighbour if and only if its generic Pfister multiples are Pfister neighbours.

%WHILE MULTIPLYING BY A PFISTER FORM KILLS OFF INFORMATION REGARDING THE ORIGINAL FORM IN GENERAL, IT SEEMS REASONABLE THAT GENERIC ONES REFLECT THE FORM (MIGHT NEED TO REWORD LATER)
%
%WHAT I MEAN BY A GENERIC PFISTER MULTIPLE

%ESTABLISH A NUMBER OF RESULTS IN THIS REGARD, SHOWING THAT THE PROPERTIES OF LIST HOLD WITH RESPECT TO A GIVEN FORM IFF THEY HOLD WRT ITS GENERIC MULTIPLES

Such results are of relevance to the task of distinguishing between properties preserved by Pfister products, as they provide a framework for extending examples existing in low dimensions. We conclude with a discussion of some apposite open questions, offering a generalisation of Hoffmann's construction of forms with maximal splitting that are not Pfister neighbours and an extension of Vishik's recent example of a $16$-dimensional form with first Witt index equal to two that is not a Pfister multiple.

Henceforth, we will let $F$ denote a field of characteristic different from two. The term ``form'' will refer to a regular quadratic form. Every form over $F$ can be diagonalised. Given $a_1,\ldots ,a_n\in F^{\x}$ for $n\in\mathbb{N}$, we denote by $\<a_1,\ldots,a_n\>$ the $n$-dimensional quadratic form $a_1X_1^2+\ldots+a_nX_n^2$. If $p$ and $q$ are forms over $F$, we denote by $p\perp q$ their orthogonal sum and by $p\otimes q$ their tensor product. For $n\in\mathbb{N}$, we will denote the orthogonal sum of $n$ copies of $q$ by $n\x q$. We use $aq$ to denote $\<a\>\otimes q$ for $a\in F^{\x}$. We write $p\simeq q$ to indicate that $p$ and $q$ are isometric, and say that $p$ and $q$ are \emph{similar} (over $F$) if $p\simeq aq$ for some $a\in F^{\x}$. For $q$ a form over $F$ and $K/F$ a field extension, we will employ the notation $q_K$ when viewing $q$ as a form over $K$ via the canonical embedding. A form $p$ is a \emph{subform of $q$} if $q\simeq p\perp r$ for some form $r$, in which case we will write $p\subset q$. A form $q$ \emph{represents $a\in F$} if there exists a vector $v$ such that $q(v)=a$. We denote by $D_F(q)$ the set of values in $F^{\x}$ represented by $q$. A form over $F$ is \emph{isotropic} if it represents zero non-trivially, and \emph{anisotropic} otherwise. Every form $q$ has a decomposition $q\simeq q_{\mathrm{an}}\perp i(q)\x\<1,-1\>$ where the anisotropic form $q_{\mathrm{an}}$ and the integer $i(q)$, the \emph{Witt index of $q$}, are uniquely determined. A form $q$ is \emph{hyperbolic} if $q_{\mathrm{an}}$ is trivial, whereby $i(q)=\frac 1 2 \dim q$. Two anisotropic forms $p$ and $q$ over $F$ are \emph{isotropy equivalent} if for every field extension $K/F$ we have that $p_K$ is isotropic if and only if $q_K$ is isotropic. The following basic fact (see \cite[Exercise I.$16$]{LAM}) will be employed frequently. 
\begin{lemma}\label{star} If $p\subset q$ with $\dim p\geqs\dim q-i(q)+1$, then $p$ is isotropic.\end{lemma}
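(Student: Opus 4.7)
The plan is to argue by contrapositive: I would assume $p$ is anisotropic and derive the bound $\dim p \leqs \dim q - i(q)$, which directly contradicts the hypothesis. Since $p \subset q$, I can write $q \simeq p \perp r$ for some form $r$, giving a vector-space decomposition of the ambient space of $q$ as $p \oplus r$, with $\dim r = \dim q - \dim p$.

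The key step will be to fix a maximal totally isotropic subspace $W \subset q$, which by definition has $\dim W = i(q)$, and to consider the projection $\pi \colon W \to r$ along the decomposition $q = p \oplus r$. I would then argue that $\pi$ is injective: if $v \in W$ satisfies $\pi(v) = 0$, then $v$ lies entirely in the $p$-summand; but $v \in W$ is isotropic for $q$, and since $q$ restricts to $p$ on this summand, one gets $p(v) = 0$, which by anisotropy of $p$ forces $v = 0$. Injectivity of $\pi$ then yields $i(q) = \dim W \leqs \dim r = \dim q - \dim p$, which rearranges to the desired bound.

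There is no real technical obstacle here, since the argument reduces to a one-line projection trick. The only subtlety worth flagging is the distinction between the orthogonal sum $q \simeq p \perp r$ of quadratic forms and the underlying vector-space direct sum along which the projection is formed; the orthogonality enters only through the observation that the restriction of $q$ to the $p$-summand coincides with $p$ itself, which is precisely what allows the anisotropy hypothesis on $p$ to be invoked.
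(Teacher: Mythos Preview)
Your argument is correct and is essentially the standard one. Note that the paper does not actually supply a proof of this lemma: it simply records it as a basic fact with a reference to \cite[Exercise I.$16$]{LAM}. Your contrapositive-plus-projection argument is precisely the intended solution to that exercise, so there is nothing further to compare.
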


%\begin{lemma}\label{star} If $\tau\subset\varphi$ with $\dim\tau\geqs\dim\varphi-i(\varphi)+1$, then $\tau$ is isotropic.\end{lemma}

%(indeed, if $\ch(F)=2$ then every anisotropic quadratic form $q$ over $F$ satisfies $\sub_q(F)=1$)

%ZERO IN THE NATURAL NUMBERS

%Given a form $q$ over $F$ and an ordering $P\in X_F$, the \emph{signature of $q$ at $P$}, denoted $\sgn_P(q)$, is the number of coefficients in a diagonalisation of $q$ that are positive at $P$ minus the number that are negative at $P$.

%CLIFFORD INVARIANT?

%INTRODUCE THE CLIFFORD INVARIANT AND THE NOTION OF INDEX.

%We let $C(q)$ denote the Clifford algebra of $q$, with $C_0(q)$ denoting its even Clifford algebra, the subalgebra of $C(q)$ of elements of even degree. 

%We denote by $c(q)$ the \emph{Clifford invariant} of $q$, which is defined as follows. 
%
%If $\dim q$ is even, then $c(q)$ is $[C(q)]$, the class of the Clifford algebra of $q$ in the Brauer group of $F$. If $\dim q$ is odd, then $c(q)$ is $[C_0(q)]$, the Brauer class of the even Clifford algebra of $q$, the subalgebra of elements of even degree in $C(q)$.

If $q$ is an even-dimensional form, its \emph{Clifford invariant} is $[C(q)]$, the class of the Clifford algebra of $q$ in the Brauer group of $F$. The Clifford invariant of an odd-dimensional form $q$ is $[C_0(q)]$, the Brauer class of the even Clifford algebra of $q$ (the subalgebra of elements of even degree in $C(q)$). Formulae for the computation of Clifford invariants can be found in \cite[Chapter V, (3.13)]{LAM}. The \emph{Schur index of a central simple algebra} is the square root of the dimension of a Brauer-equivalent division algebra. An \emph{ordering of $F$} is a set $P\subset F^{\x}$ such that $P\cup -P=F^{\x}$ and $x+y, xy\in P$ for all $x,y\in P$. We say that $F$ is a \emph{(formally) real field} if it has an ordering. Given a form $q$ over $F$ and an ordering $P$ of $F$, the \emph{signature of $q$ at $P$}, denoted $\sgn_P(q)$, is the number of coefficients in a diagonalisation of $q$ that are in $P$ minus the number that are not in $P$. A form $q$ over $F$ is \emph{indefinite at $P$} if $|\sgn_P(q)|<\dim q$. 

%INDEX

%HOW MUCH OF THE FOLLOWING DO I NEED TO KEEP?

%We will let $X_F$ denote the space of orderings of $F$.
%If $X_F$ is non-empty, w

%For $a\in F^{\x}$ a sum of squares in $F^{\x}$, denoted $a\in\sum {F^{\x}}^2$, the \emph{length of $a$}, $\ell_F(a)$, is the least number of squares in $F^{\x}$ that sum to $a$ (we set $\ell_F(a)=\infty$ if $a\notin \sum {F^{\x}}^2$). 

%The \emph{Pythagoras number of $F$} is $p(F)=\sup\{ \ell_F(a)\mid a\in\sum {F^{\x}}^2\}$. 

%For $F$ a field without orderings, the \emph{$u$-invariant of $F$} is $u(F)=\sup\{\dim q\mid q\textrm{ is an anisotropic form over }F\}$. 

%For $n,m\in\mathbb{N}$, we will often invoke the identity $\left\lceil\frac n m\right\rceil=\left\lfloor\frac {n-1} {m}\right\rfloor+1$.

%An element $a\in F^{\x}$ is \emph{totally positive} if $a\in P$ for all $P\in X_F$, which is the case if and only if $a$ is a sum of squares in $F^{\x}$, denoted $a\in\sum {F^{\x}}^2$.

For $n\in\mathbb{N}$, an \emph{$n$-fold Pfister form} over $F$ is a form isometric to $\<1,a_1\>\otimes\ldots\otimes\<1,a_n\>$ for some $a_1,\ldots ,a_n\in F^{\times}$ (the form $\< 1\>$ is the $0$-fold Pfister form). Isotropic Pfister forms are hyperbolic \cite[Theorem X.1.7]{LAM}. A form $\tau$ over $F$ is a \emph{neighbour} of a Pfister form $\pi$ if $\tau\subset a\pi$ for some $a\in F^{\x}$ and $\dim{\tau}>\frac 1 2 \dim\pi$. For $\tau$ a neighbour of a Pfister form $\pi$ with $\tau\perp \gamma\simeq a\pi$ for some $a\in F^{\x}$, the form $\gamma$ is called the \emph{complementary form} of $\tau$. All forms of dimension not greater than one are said to be \emph{excellent}; a form $q$ of dimension $n\geqs 2$ is \emph{excellent} if $q$ is a Pfister neighbour and the complementary form of $q$ is excellent. A form $q$ over $F$ is \emph{round} if $D_F(q)=G_F(q)$, where $G_F(q)=\{a\in F^{\x}\mid aq\simeq q\}$ is the group of similarity factors of $q$. Pfister forms are round (see \cite[Theorem X.1.8]{LAM}). 

%An anisotropic form $q$ is isotropy equivalent to a Pfister form $\pi$ if and only if $q$ is a neighbour of $\pi$ \cite[Proposition 2]{H}.

%A form $q$ over $F$ is a \emph{group form} if $D_F(q)$ is a subgroup of $F^{\x}$.

%USE $G(q)$ NOTATION

%SPLITTING PATTERN AND HIGHER WITT INDICES (INCLUDING FIRST)

%We recall the definition of the splitting pattern of a quadratic form q. Let
%k0 = k, q0 = qan (the anisotropic part of q, well-defined up to isomorphism), and
%then inductively define kj = kj?1(qj?1) and qj = ((qj?1)kj )an. We stop when qh
%has dimension at most 1. Then the splitting pattern i(q) is the sequence (i1, . . . , ih)
%where ij is the Witt index iW((qj?1)kj ).

For a form $q$ over $F$ with $\dim q=n\geqs 2$ and $q\not\simeq\<1,-1\>$, the \emph{function field $F(q)$ of $q$} is the quotient field of the integral domain $F[X_1,\ldots ,X_n]/(q(X_1,\ldots ,X_n))$ (this is the function field of the affine quadric $q(X)=0$ over $F$). To avoid case distinctions, we set $F(q)=F$ if $\dim q\leqs 1$ or $q\simeq\<1,-1\>$. Letting $F_0=F$, $i_0(q)=i(q)$ and $q_0\simeq q_{\mathrm{an}}$, following Knebusch \cite{Kn1} we inductively define $$F_{j+1}=F_{j}(q_{j}), \quad i_{j+1}(q)=i((q_j)_{F_{j+1}})\quad \text{and}\quad q_{j+1}\simeq((q_{j})_{F_{j+1}})_{\mathrm{an}},$$ stopping when $\dim q_h\leqs 1$. This integer $h$ is the \emph{height of $q$}, the tower of fields $F=F_0\subset F_1\subset\ldots\subset F_h$ is the \emph{generic splitting tower of $q$}, the forms $q_1,\ldots ,q_h$ are the \emph{higher kernel forms of $q$} and the natural numbers $i_1(q),\ldots , i_h(q)$ are the \emph{higher Witt indices of $q$}. The sequence $(i_1(q),\ldots , i_h(q))$ is called the {\em (incremental) splitting pattern of $q$}. For all forms $p$ over $F$ and all extensions $K/F$ such that $q_K$ is isotropic, we have that $i(p_{F(q)})\leqs i(p_K)$ (see \cite[Proposition 3.1 and Theorem 3.3]{Kn1}). In particular, with respect to $i_1(q)$, the {\em first Witt index of $q$}, we have that $i_1(q)\leqs i(q_K)$ for all extensions $K/F$ such that $q_K$ is isotropic. An anisotropic form $q$ is said to have \emph{maximal splitting} if $\dim q-i_1(q)$ is a power of two. As per \cite[Theorem X.$4.1$]{LAM}, $F(q)$ is a purely-transcendental extension of $F$ if and only if $q$ is isotropic over $F$. On account of this fact, one can see that two anisotropic forms $p$ and $q$ over $F$ are isotropy equivalent if and only if $p_{F(q)}$ and $q_{F(p)}$ are isotropic. The behaviour of orderings with respect to function field extensions is governed by the following result due to Elman, Lam and Wadsworth \cite[Theorem 3.5]{ELW} and, independently, Knebusch \cite[Lemma 10]{GS}.
\begin{theorem}\label{ELW} Let $q$ be a form of dimension at least two over a real field $F$. An ordering $P$ of $F$ extends to $F(q)$ if and only if $q$ is indefinite at $P$.
\end{theorem}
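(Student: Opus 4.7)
The plan is to handle the two implications separately. The ``only if'' direction is a short argument using the generic zero of $q$ in $F(q)$; the ``if'' direction requires passing to a real closure and then constructing an ordering on $F(q)$ by more careful means.

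Suppose first that $P$ extends to an ordering $\widetilde{P}$ of $F(q)$, and diagonalise $q\simeq \<a_1,\ldots ,a_n\>$. By construction of $F(q)$, the residue classes $x_1,\ldots ,x_n\in F(q)$ of $X_1,\ldots ,X_n$ satisfy $a_1x_1^2+\ldots +a_nx_n^2=0$ with at least one $x_i$ non-zero. If $q$ were definite at $P$ then, after replacing $P$ by $-P$ if necessary, each $a_i$ would lie in $P\subset \widetilde{P}$, making each $a_ix_i^2$ non-negative and at least one strictly positive (as squares of non-zero elements are positive in any ordering). The relation then yields a strictly positive element equal to zero, a contradiction, so $q$ must be indefinite at $P$.

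For the converse, suppose $q$ is indefinite at $P$ and let $R$ be a real closure of $(F,P)$. Since $q_R$ has coefficients of both signs and $R$ is real-closed, $q_R$ is isotropic. In the generic case $\dim q\geqs 3$, $q$ is irreducible as a polynomial over $R$ (a nondegenerate quadratic form in $\geqs 3$ variables is always irreducible), so $R[q_R]=R[X_1,\ldots,X_n]/(q)$ is a domain containing the coordinate ring $F[q]$, and passing to fraction fields gives an embedding $F(q) \hookrightarrow R(q_R)$. By \cite[Theorem X.$4.1$]{LAM}, the isotropy of $q_R$ makes $R(q_R)$ purely transcendental over $R$, and any ordering of $R$ extends to a rational function field over it; restricting such an extension to $F(q)$ then supplies an ordering extending $P$.

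The remaining case $\dim q=2$ must be dispatched separately, since then $q_R$ is hyperbolic and the embedding argument collapses. Writing $q\simeq\<1,-d\>$ with $d>_P 0$, if $d\in F^{\times 2}$ then $F(q)=F$ by convention and the statement is immediate; otherwise the substitution $u=X/Y$ exhibits $F(q)\simeq F(\sqrt{d})(Y)$ as a purely transcendental extension of $F(\sqrt{d})$, to which $P$ extends routinely by first choosing a sign for $\sqrt{d}$ and then placing any ordering on the transcendental $Y$. I expect the main obstacle to be this low-dimensional case distinction together with the verification that $q_R$ is reducible precisely in dimension two, rather than any single technical step: each ingredient is elementary, but combining them uniformly and checking that the transferred ordering really restricts to $P$ on $F$ is what requires care.
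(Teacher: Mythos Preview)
The paper does not supply a proof of this theorem; it is stated as a known result of Elman--Lam--Wadsworth \cite[Theorem 3.5]{ELW} (and, independently, Knebusch), so there is no in-paper argument to compare against.

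Your proof is essentially correct and follows the standard route via real closures. Two minor points of exposition. In the ``only if'' direction, ``replacing $P$ by $-P$'' is a slip, since $-P$ is not an ordering; you mean to replace $q$ by $-q$ (equivalently, note that if all $a_i\in -P$ then $\sum(-a_i)x_i^2=0$ produces the same contradiction). In the $\dim q=2$ case you tacitly scale $q$ to the shape $\<1,-d\>$; this is harmless since $F(q)=F(cq)$ for every $c\in F^\times$, but it should be said. The verification that the ordering pulled back from $R(q_R)$ restricts to $P$ on $F$ is immediate: a real closed field carries a unique ordering, and $R$ was chosen as the real closure of $(F,P)$. Otherwise the embedding $F(q)\hookrightarrow R(q_R)$ for $\dim q\geqs 3$ (via irreducibility of a nondegenerate quadratic form in at least three variables over any field) and the pure transcendence of $R(q_R)/R$ when $q_R$ is isotropic are exactly the right ingredients.
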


\cite[Theorem 1]{H} and \cite[Theorem 4.1]{KM} represent important isotropy criteria with respect to function fields of quadratic forms. We recall these results below.

%We will regularly invoke these results throughout this article, and therefore recall them below.

%4 uses
%\cite[Theorem 1]{H}:

\begin{theorem}\label{H95} $($Hoffmann$)$ Let $p$ and $q$ be forms over $F$ such that $p$ is anisotropic. If $\dim p\leqs 2^n<\dim q$ for some integer $n\geqs 0$, then
$p_{F(q)}$ is anisotropic.\end{theorem}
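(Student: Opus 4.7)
My plan is to proceed by induction on $n$. The base case $n=0$ reduces to the trivial observation that an anisotropic form of dimension at most one---hence either trivial or of the form $\<a\>$ with $a\in F^{\x}$---remains anisotropic over every field extension, so $p_{F(q)}$ is anisotropic regardless of $q$.

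For the inductive step, I would argue by contradiction: suppose $p$ is anisotropic with $\dim p\leqs 2^n<\dim q$ but $p_{F(q)}$ is isotropic. A preliminary reduction disposes of the easy case: if $\dim p\leqs 2^{n-1}$, then (since $2^{n-1}<2^n<\dim q$) the inductive hypothesis applied with $n-1$ already forces $p_{F(q)}$ to be anisotropic, a contradiction. So one may assume $2^{n-1}<\dim p\leqs 2^n$, and indeed (after, if necessary, passing to an anisotropic extension of $p$ by one-dimensional summands chosen from outside the value set) reduce further to the case $\dim p=2^n$.

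The crux is then to extract from the isotropy of $p_{F(q)}$ a structural relationship between $p$ and $q$. I would invoke the Cassels--Pfister subform technique: an isotropy relation $p(v)=0$ in $F(q)$, cleared of denominators, furnishes a polynomial identity over $F$ which the subform theorem converts into a similitude embedding of $q$ into a form closely attached to $p$. With the reduction $\dim p=2^n$ in place, one can arrange the enveloping form to be an $(n+1)$-fold Pfister multiple of dimension $2^{n+1}$; roundness of Pfister forms, together with the fact that isotropic Pfister forms are hyperbolic, then constrains $q$ to occupy at most half of the envelope, yielding $\dim q\leqs 2^n$ and the desired contradiction.

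The principal obstacle I anticipate is sharpening the bound. The naive subform argument only delivers the soft estimate $\dim q\leqs 2\dim p\leqs 2^{n+1}$; closing the gap to the sharp inequality $\dim q\leqs 2^n$ is exactly where the power-of-two structure of the hypothesis bites. Bridging it requires a careful choice of the enveloping Pfister form (so that $q$ fits inside a \emph{proper} half-subform rather than the whole Pfister form) and probably a simultaneous analysis of the generic splitting of $q$ via Knebusch's tower. This is the step where I expect the bulk of the technical work to reside, and where the induction hypothesis is most likely to be reinvoked on a smaller pair of forms arising from the higher kernel forms of $q$.
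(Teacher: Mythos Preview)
The paper does not prove this statement: Theorem~\ref{H95} is quoted from Hoffmann's paper \cite{H} as background (alongside the Karpenko--Merkurjev theorem) with no argument supplied, so there is no proof here against which to compare your proposal.

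On its own terms, your proposal is an outline that correctly locates the difficulty but does not resolve it. The preliminary reductions --- the base case $n=0$ and the passage to $\dim p=2^n$ by adjoining anisotropic one-dimensional summands --- are fine. But you explicitly concede that the Cassels--Pfister subform argument you invoke yields only $\dim q\leqs 2\dim p=2^{n+1}$, one power of two short of the required contradiction, and your closing paragraph offers only speculation (``a careful choice of the enveloping Pfister form'', ``probably a simultaneous analysis of the generic splitting of $q$'') in place of an argument. That missing step is not a technicality to be filled in later; it is the entire content of the theorem. In particular, there is no canonical Pfister form of dimension $2^{n+1}$ attached to an arbitrary anisotropic $p$ of dimension $2^n$ into which $q$ could be forced to sit as a half, so the ``Pfister envelope'' strategy you sketch has no obvious realisation. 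Hoffmann's actual proof in \cite{H} does use the subform theorem, but the induction is organised differently from what you outline, and closing the gap you identify requires a genuine additional idea beyond the reductions you have carried out.
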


%%6 uses
%\cite[Theorem 4.1]{KM}: \begin{theorem}\label{km} $($Karpenko, Merkurjev$)$ Let $\varphi$ and $\psi$ be anisotropic forms over $F$.
%
%$(i)$ If $\psi_{F(\varphi)}$ is isotropic, then $\dim\psi - i_1(\psi)\geqs\dim\varphi - i_1(\varphi)$, 
%
%$(ii)$ If $\dim\psi - i_1(\psi)=\dim\varphi - i_1(\varphi)$, then $\psi_{F(\varphi)}$ is isotropic if and only if $\varphi_{F(\psi)}$ is isotropic.\end{theorem}

\begin{theorem}\label{km} $($Karpenko, Merkurjev$)$ Let $p$ and $q$ be anisotropic forms over $F$ such that $p_{F(q)}$ is isotropic. Then \begin{enumerate}[$(i)$]
\item $\dim p - i_1(p)\geqs\dim q - i_1(q)$;
\smallskip
\item $\dim p - i_1(p)=\dim q - i_1(q)$ if and only if $q_{F(p)}$ is isotropic.\end{enumerate}\end{theorem}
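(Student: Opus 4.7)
This is a celebrated and deep result whose standard proof is far from elementary; my plan would be to follow the approach of Karpenko and Merkurjev through Chow motives of quadrics with coefficients in $\mathbb{F}_2$. To any anisotropic form $\varphi$ of dimension at least two, one associates the smooth projective quadric $X_\varphi$ of dimension $\dim\varphi-2$ and its Chow motive $M(X_\varphi)$ in the category of Chow motives over $F$ with $\mathbb{F}_2$-coefficients. The central object is the \emph{upper motive} $U(\varphi)$: the unique indecomposable direct summand of $M(X_\varphi)$ that contains, after base change to an algebraic closure, the class of a rational point in codimension zero. A fundamental theorem of Karpenko guarantees that $U(\varphi)$ is well-defined and that the range of codimensions in which it has non-trivial Chow groups has length governed precisely by $\dim\varphi - i_1(\varphi)$, so that this integer is a motivic invariant of $X_\varphi$.

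For part $(i)$, the hypothesis that $p_{F(q)}$ is isotropic supplies a rational map $X_q\to X_p$ defined over $F$, and hence non-trivial correspondences in the product $X_q\times X_p$. Combining Rost's nilpotence theorem for quadrics with a study of the idempotents carved out by these correspondences, one shows that the upper motives of $X_p$ and $X_q$ fit into a direct-summand containment (up to Tate twist) inside one another's motives. A careful bookkeeping of the Tate shifts and of the lengths of the motives involved then extracts the inequality $\dim p - i_1(p) \geqs \dim q - i_1(q)$. For part $(ii)$, equality in $(i)$ forces the containment in question to be an isomorphism $U(p)\simeq U(q)$ of motives up to Tate twist; exchanging the roles of $p$ and $q$ in the resulting cycle-theoretic data then yields the symmetric correspondence needed to construct a rational map $X_p\to X_q$, so that $q_{F(p)}$ becomes isotropic. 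Conversely, if both isotropy conditions hold, applying $(i)$ twice yields mutual inequalities and hence equality.

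The main obstacle is the cycle-theoretic core of the argument: establishing the precise statement that $U(p)$ occurs as a summand of $M(X_q)$ (up to Tate twist) whenever $p_{F(q)}$ is isotropic. This relies on Rost's nilpotence theorem, Karpenko's uniqueness theorem for the motivic decomposition of anisotropic quadrics, and a delicate calculation on $X_p\times X_q$ both to pin down the correct Tate twist and to confirm that the candidate projector descends from the algebraic closure to $F$ rather than living only over a splitting field. None of this machinery is accessible within the elementary framework developed in the paper so far, and accordingly the result is invoked here as a black box rather than re-proved.
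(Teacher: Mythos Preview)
The paper does not give a proof of this theorem: it is stated as Theorem~1.4 and attributed to Karpenko and Merkurjev, with a citation to \cite{KM}, and is thereafter used as a black box. Your final paragraph already recognises this, and your overall assessment---that the result lies outside the elementary quadratic-form techniques of the present paper and requires the motivic machinery of Chow motives of quadrics, Rost nilpotence, and Karpenko's theory of upper motives---is accurate.

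As a sketch of the actual Karpenko--Merkurjev argument your outline is broadly faithful. One small caveat: the original paper \cite{KM} predates the systematic language of ``upper motives'' (which Karpenko developed more fully later), and the proof there is phrased more directly in terms of cycles on products of quadrics and the computation of the essential dimension; the upper-motive reformulation you describe is a clean modern packaging of the same ideas. Either way, there is no ``paper's own proof'' to compare against here, and your decision to treat the statement as an external input is exactly what the paper itself does.
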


%MAYBE REFERENCE PART TWO AS BEING PROVED BY VISHIK BUT REFORMULATED BY KARPENKO? AS PER IZH ANNALS I THINK

%For further details regarding the above, we refer the reader to \cite[Ch.X]{LAM}.

%\section{Results}

%{\small {\it
%IZHBOLDIN'S RESULT?
%
%SPECIALISATION OF ROUSSEY (CHECK)? Probably neither}}

%GET RID OF MOST OF THE FOLLOWING
%

%BACKGROUND ON TRANSCENDENTALS
%
%Every form $\psi$ over $F(\!(x)\!)$ can be written as $\psi_1\perp x\psi_2$ for $\psi_1$ and $\psi_2$ forms over $F$ (see \cite[Ch.VI, \S 1]{LAM}).
%
%PUT THE FOLLOWING HERE? IT'S CURRENTLY IN PROPOSITION~\ref{divtrans}

%As was observed in the proof of \cite[Proposition 7]{H}, since every non-zero square class in $F(\!(x)\!)$ can be represented by $a$ or $ax$ for some $a\in F^{\x}$, and since $\<\!\< -ax_1,-bx_1\>\!\>\simeq \<\!\< -ab,-ax_1\>\!\>$ for all $a,b\in F^{\x}$, we may assume that either $\vartheta\simeq  \<\!\< -a_1,\ldots ,-a_{n+1}\>\!\>$ or $\vartheta\simeq  \<\!\< -a_1,\ldots ,-a_n,-a_{n+1}x_1\>\!\>$ for some $a_1,\ldots ,a_{n+1}\in F^{\x}$.

%in $F(\!(x)\!)$

%We will consider the isotropy of forms over Laurent series fields. 

Over $F(\!(x)\!)$, the Laurent series field in the variable $x$ over $F$, we recall that every non-zero square class can be represented by $a$ or $ax$ for some $a\in F^{\x}$, whereby every form $\varphi$ over $F(\!(x)\!)$ can be written as $p\perp xq$ for $p$ and $q$ forms over $F$. We recall the following folkloric result regarding forms over Laurent series fields.

%Call the following folklore?

%QUESTION: Should I establish everything over Laurent series fields, and then simply remark that they also hold over rational function fields? Maybe.

%the isotropy of

%$F(x)$ or 
%rational function and

\begin{lemma}\label{Hlemma} Let $p$ and $q$ be forms over $F$. Considering $p\perp x q$ as a form over $F(\!(x)\!)$, we have that $i(p\perp x q)=i(p)+ i(q)$.
\end{lemma}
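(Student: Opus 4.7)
The plan is to reduce the claim to Springer's classical anisotropy theorem over Laurent series fields. Decompose $p \simeq p_{\mathrm{an}}\perp i(p)\x\<1,-1\>$ and $q \simeq q_{\mathrm{an}}\perp i(q)\x\<1,-1\>$. Since any hyperbolic plane is universal, $x\<1,-1\>\simeq\<1,-1\>$ over $F(\!(x)\!)$, yielding the isometry
\[
p\perp xq \;\simeq\; p_{\mathrm{an}}\perp xq_{\mathrm{an}}\perp\bigl(i(p)+i(q)\bigr)\x\<1,-1\>
\]
over $F(\!(x)\!)$. Consequently, the equality $i(p\perp xq)=i(p)+i(q)$ reduces to the statement that $p_{\mathrm{an}}\perp xq_{\mathrm{an}}$ is anisotropic over $F(\!(x)\!)$.

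For that anisotropy, I would run the standard valuation argument. Suppose for contradiction that vectors $u$ and $v$ over $F(\!(x)\!)$, not both zero, satisfy $p_{\mathrm{an}}(u)+xq_{\mathrm{an}}(v)=0$. By homogeneity of degree two, rescale $(u,v)$ by a suitable power of $x$ so that all coordinates lie in $F[\![x]\!]$ with at least one coordinate being a unit. Reducing mod $x$ yields $p_{\mathrm{an}}(\bar u)=0$ over $F$, so the anisotropy of $p_{\mathrm{an}}$ forces $\bar u=0$, i.e., every coordinate of $u$ is divisible by $x$. Writing $u=xu'$ and dividing the relation through by $x$ gives $xp_{\mathrm{an}}(u')+q_{\mathrm{an}}(v)=0$, and a second reduction mod $x$ then yields $q_{\mathrm{an}}(\bar v)=0$, whence $\bar v=0$ by anisotropy of $q_{\mathrm{an}}$. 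But now every coordinate of both $u$ and $v$ lies in $xF[\![x]\!]$, contradicting the chosen normalisation.

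The substantive input is Springer's theorem, so there is no deep obstacle here; the only care required is the valuation bookkeeping, namely passing to a primitive representative and performing the two mod-$x$ reductions in the correct order so that $u$ and $v$ are successively forced into $xF[\![x]\!]$. The Witt-decomposition step is immediate from universality of the hyperbolic plane, and no further ingredients are needed.
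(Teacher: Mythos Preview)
Your argument is correct and follows the same approach as the paper: both reduce via Witt decomposition to the anisotropy of $p_{\mathrm{an}}\perp xq_{\mathrm{an}}$ over $F(\!(x)\!)$, which is Springer's Theorem. The only difference is that the paper simply cites Springer's Theorem \cite[Theorem VI.$1.4$]{LAM}, whereas you have written out the standard valuation proof of the special case needed.
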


%, the Laurent series field in the variable $x$ over $F$

\begin{proof} Applying Springer's Theorem for complete discretely valued fields \cite[Theorem VI.$1.4$]{LAM}, one obtains that $p\perp x q$ is anisotropic over $F(\!(x)\!)$ if and only if $p$ and $q$ are anisotropic over $F$. The result follows by applying Witt decomposition to the forms $p$ and $q$ over $F$.
\end{proof}

\section{The isotropy of multiples of Pfister forms}

%DROP WEAKEST RESULTS

%REFORMULATE LANGUAGE

Since the isotropy of scalar multiples of Pfister forms is well understood (indeed, an anisotropic form $q$ of dimension at least two is a scalar multiple of a Pfister form if and only if $q$ is hyperbolic over $F(q)$, see \cite[Corollary $23.4$]{EKM}), we will restrict our attention to multiples of Pfister forms with forms of dimension at least two. 

%\cite{EL} and \cite{EL2}

Elman and Lam obtained a number of important results on the isotropy of multiples of Pfister forms in the early seventies. The following classical result, as formulated below, is a consequence of their representation theorem \cite[Theorem 1.4]{EL} (see \cite[Lemma 3.1]{H4} for a proof of this). Wadsworth and Shapiro \cite[Theorem 2]{WS} established that this result holds, more generally, for multiples of round forms.

%The following classical result regarding the isotropy of multiples of Pfister forms is well known. This result

%The following result regarding the isotropy of multiples of round forms is well known (see \cite[Theorem 2]{WS}).
%
%\begin{theorem}\label{WS} $($Wadsworth, Shapiro$)$ Let $\alpha$ be an anisotropic round form over F and let $q$ be another
%form over $F$. If $\alpha\otimes q$ is isotropic, then there exist forms $q_1$ and $q_2$ over $F$ such that $\alpha\otimes q_1$ is anisotropic, $q_2$ is hyperbolic,
%and $\alpha\otimes q\simeq \alpha\otimes q_1\perp \alpha\otimes q_2$. In particular, $i(\alpha\otimes q)=(\dim\alpha) i(q_2)$.
%\end{theorem}

\begin{theorem}\label{WS} $($Elman, Lam$)$ Let $\pi$ be an anisotropic Pfister form over F and let $q$ be a form over $F$ of dimension at least two. 
If $\pi\otimes q$ is isotropic, then there exist forms $q_1$ and $q_2$ over $F$ such that $\pi\otimes q_1$ is anisotropic, $q_2$ is hyperbolic,
and $\pi\otimes q\simeq \pi\otimes q_1\perp \pi\otimes q_2$. In particular, $i(\pi\otimes q)=(\dim\pi) i(q_2)$.
\end{theorem}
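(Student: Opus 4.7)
The natural approach is to proceed by induction on $\dim q$. The base case $\dim q=2$ is immediate: the form $\pi\otimes q$ is then a scalar multiple of a Pfister form, and an isotropic such form is hyperbolic. Since any two hyperbolic forms of a given dimension are isometric, one may take $q_1$ to be the empty form and $q_2\simeq\langle 1,-1\rangle$, so that $\pi\otimes q\simeq \pi\otimes q_2$, with $\pi\otimes q_1$ trivially anisotropic.

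For the inductive step, assume $\dim q=n\geqs 3$ and write $q\simeq \langle a\rangle\perp q'$. If $\pi\otimes q'$ is isotropic, apply the inductive hypothesis to $q'$ to obtain $\pi\otimes q'\simeq \pi\otimes\tilde{q}_1\perp\pi\otimes\tilde{q}_2$ with $\pi\otimes\tilde{q}_1$ anisotropic and $\tilde{q}_2$ hyperbolic. Then $\pi\otimes q\simeq \pi\otimes(\langle a\rangle\perp\tilde{q}_1)\perp\pi\otimes\tilde{q}_2$; if $\pi\otimes(\langle a\rangle\perp\tilde{q}_1)$ is anisotropic, we are done, and otherwise $\dim(\langle a\rangle\perp\tilde{q}_1)<n$ (since $\dim\tilde{q}_2\geqs 2$), so a recursive application finishes this case.

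The substantive case is when $\pi\otimes q'$ is anisotropic. Since $a\pi$ is also anisotropic, the isotropy of $\pi\otimes q=a\pi\perp\pi\otimes q'$ yields non-zero vectors $u,v$ with $a\pi(u)+(\pi\otimes q')(v)=0$. Setting $c:=\pi(u)\in D_F(\pi)\setminus\{0\}$, this rearranges to $-ac\in D_F(\pi\otimes q')$. By the roundness of $\pi$, $c\in G_F(\pi)\subseteq G_F(\pi\otimes q')$, whence $-a\in D_F(\pi\otimes q')$. Writing $q'\simeq \langle a_2,\ldots,a_n\rangle$ and picking vectors $y_i$ with $\sum_i a_i\pi(y_i)=-a$, the roundness of $\pi$ permits the rescaling $a_i\pi\simeq a_ic_i\pi$ (where $c_i:=\pi(y_i)$, for each $i$ with $c_i\neq 0$), yielding an isometry $\pi\otimes q'\simeq \pi\otimes\langle b_2,\ldots,b_n\rangle$ in which the diagonal form $\langle b_2,\ldots,b_n\rangle$ explicitly represents $-a$ (by evaluating at $\epsilon_i=1$ when $c_i\neq 0$ and $\epsilon_i=0$ otherwise). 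Thus $\pi\otimes q'\simeq -a\pi\perp\pi\otimes\sigma$ for some form $\sigma$ of dimension $n-2$, and so $\pi\otimes q\simeq \pi\otimes\langle a,-a\rangle\perp\pi\otimes\sigma$. Since $\langle a,-a\rangle$ is hyperbolic, we conclude by recursing on $\pi\otimes\sigma$ if the latter is isotropic.

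The Witt-index formula then falls out immediately: since $q_2$ is hyperbolic, so is $\pi\otimes q_2$, and so $i(\pi\otimes q)=i(\pi\otimes q_2)=\tfrac{1}{2}\dim\pi\cdot\dim q_2=(\dim\pi)\,i(q_2)$. The main technical obstacle is the anisotropic case for $\pi\otimes q'$, where the roundness of $\pi$ must be leveraged twice: once to transfer a represented value across scalars by embedding $D_F(\pi)=G_F(\pi)$ into $G_F(\pi\otimes q')$, and once again to re-diagonalise the $a_i\pi$-blocks in a manner that exposes $-a\pi$ as an explicit orthogonal summand. Everything else reduces to Witt-decomposition bookkeeping and the induction.
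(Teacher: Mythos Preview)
Your argument is correct. The paper does not actually prove this theorem: it records it as a classical result of Elman and Lam, notes that it follows from their representation theorem \cite[Theorem 1.4]{EL}, and points to \cite[Lemma 3.1]{H4} for a written proof. Your induction is essentially a direct unpacking of that representation theorem --- the key move, rescaling each block $a_i\pi$ by $c_i=\pi(y_i)\in G_F(\pi)$ so that the new diagonal $\langle b_2,\ldots,b_n\rangle$ visibly represents $-a$, is exactly the content of Elman--Lam representation --- so your approach coincides with the one the paper implicitly invokes. One minor remark: in the recursive step after writing $\pi\otimes q\simeq\pi\otimes\langle a,-a\rangle\perp\pi\otimes\sigma$, when $\dim\sigma\leqslant 1$ the form $\pi\otimes\sigma$ is automatically anisotropic (being either empty or a scalar multiple of the anisotropic $\pi$), so the recursion terminates cleanly without needing the inductive hypothesis there.
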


With respect to the above theorem, we clearly have that $i(q_2)\geqs i(q)$. These quantities do not appear to satisfy any stronger relation in general however (indeed, the form $q$ may be anisotropic).

Theorem~\ref{WS} has a number of important consequences. The following statement, which is regularly applied in the literature, is one such result.

%One such statement, which 

\begin{cor}\label{WScor} Let $q$ a form of dimension at least two and $\pi$ similar to a Pfister form be such that $\pi\otimes q$ is anisotropic over $F$. Then $i_1(\pi\otimes q)\geqs \dim\pi$.
\end{cor}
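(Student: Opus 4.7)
The plan is to obtain this as a fairly direct consequence of Theorem~\ref{WS} by passing to the generic splitting field of $\pi \otimes q$.

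First I would dispose of the ``similar to'' hypothesis. Since a scalar multiple $a\pi'$ of a Pfister form $\pi'$ satisfies $a\pi' \otimes q \simeq \pi' \otimes (aq)$, and since the Witt indices and isotropy properties depend only on the isometry class, we may assume that $\pi$ is itself a Pfister form. The form $\pi \otimes q$ is anisotropic of dimension at least $2 \dim\pi \geqs 2$, so its function field $K = F(\pi \otimes q)$ is a well-defined extension and, by definition, $i_1(\pi \otimes q) = i((\pi \otimes q)_K)$.

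Next I would compute the Witt index of $(\pi \otimes q)_K$ by distinguishing whether $\pi_K$ is anisotropic. If $\pi_K$ is still anisotropic, then Theorem~\ref{WS} applied over $K$ (note that $\dim q \geqs 2$ is preserved under scalar extension) produces a decomposition $(\pi \otimes q)_K \simeq \pi_K \otimes q_1 \perp \pi_K \otimes q_2$ with $q_2$ hyperbolic and $i((\pi \otimes q)_K) = (\dim \pi)\, i(q_2)$. Because $(\pi \otimes q)_K$ is isotropic by construction, the component $q_2$ must be non-trivial, hence $i(q_2) \geqs 1$, and consequently $i((\pi \otimes q)_K) \geqs \dim \pi$. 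In the remaining case, $\pi_K$ is isotropic, hence hyperbolic as recorded in the excerpt, so $(\pi \otimes q)_K$ is hyperbolic and $i((\pi \otimes q)_K) = \tfrac{1}{2}(\dim \pi)(\dim q) \geqs \dim \pi$ since $\dim q \geqs 2$.

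Combining the two cases and recalling $i_1(\pi \otimes q) = i((\pi \otimes q)_K)$ yields the required bound. There is no genuine obstacle here; the only point requiring care is the verification that Theorem~\ref{WS} is applicable over $K$ (which amounts to checking anisotropy of $\pi_K$, handled by splitting into cases) and recognising that the second case, which at first sight falls outside the scope of Theorem~\ref{WS}, in fact yields a much stronger inequality.
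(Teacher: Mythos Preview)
Your argument is correct and matches the paper's own (implicit) derivation: the paper does not write out a proof of Corollary~\ref{WScor} but, in the introduction, explains that one ``view[s] this product over a generic field extension that makes it isotropic, thereby obtaining that the first Witt index of $\pi\otimes q$ is a multiple of the dimension of $\pi$, and thus at least the dimension of $\pi$'' --- precisely your passage to $K=F(\pi\otimes q)$ followed by Theorem~\ref{WS}. Your case distinction on whether $\pi_K$ remains anisotropic is the right way to make the application of Theorem~\ref{WS} rigorous.
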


%Given that $F(\pi\otimes q)$ is the generic zero field of $\pi\otimes q$, we thus have that $i((\pi\otimes q)_K)\geqs \dim \pi$ for $K/F$ such that $\pi\otimes q$ is isotropic over $K$. Moreover, given the generic nature of the extension, it is often the case that the value of $i_1(\pi\otimes q)$ actually equals $\dim\pi$.

Thus, we have that $i((\pi\otimes q)_K)\geqs \dim \pi$ for $K/F$ such that $\pi\otimes q$ is isotropic over $K$. Moreover, as $F(\pi\otimes q)$ is a generic zero field of $\pi\otimes q$, it is often the case that when $i_1(\pi\otimes q)$ can be precisely determined, its value actually equals $\dim\pi$. Thus, while the value of $i_1(\pi\otimes q)$ has long been known to be a multiple of $\dim\pi$, in accordance with Theorem~\ref{WS}, it is perhaps surprising that more can be said regarding this multiple in general. We will invoke \cite[Th\'eor\`eme 6.4.2]{R}, stated below, to achieve this. In his thesis, Roussey offers a number of proofs of this result, which he introduces as being already known but hitherto unwritten.

% (when it can be determined)

% Indeed, as per Theorem~\ref{WS}, we have that $i((\pi\otimes q)_K)$ is a multiple of $\dim\pi$ for all such extensions $K/F$.
%
%multiples
%
%given the nature of it, dim pi can be attained and one might not expect to be able to say more regarding the multiple

%in our main result, , we will show that more can be said.

%
%To do this, we will invoke the following statement, which is  \cite[Th\'eor\`eme 6.4.2]{R}. 
%
%The next statement we will recall is \cite[Th\'eor\`eme 6.4.2]{R}. 

%In his thesis, Roussey offers a number of proofs of the following result \cite[Th\'eor\`eme 6.4.2]{R}. He refers to this result

% \cite[Th\'eor\`eme 6.4.2]{R}, which he references as being a known but previously-unwritten consequence of Theorem~\ref{WS}.

%The following result is due to Roussey \cite[Th\'eor\`eme 6.4.2]{R}.

\begin{theorem}\label{R} $($Roussey$)$ Let $p$ and $q$ be two forms over F of dimension at least two and let $\pi$ be similar to a Pfister form over $F$. If $p$ is isotropic over $F(q)$, then $\pi\otimes p$ is isotropic over $F(\pi\otimes q)$. 
\end{theorem}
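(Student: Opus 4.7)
After reducing to the case where $\pi$ is a Pfister form (similarity factors do not affect function fields or isotropy of Pfister multiples), I would first dispatch the degenerate cases in which one of $\pi$, $p$, or $q$ is already isotropic over $F$. Each is easy: if $\pi$ is isotropic it is hyperbolic, so $\pi\otimes p$ is hyperbolic over $F$; if $p$ is isotropic over $F$ then so is $\pi\otimes p$; and if $q$ is isotropic over $F$ then $F(q)/F$ is purely transcendental, forcing $p$ itself to be isotropic over $F$ and hence $\pi\otimes p$ to be isotropic over every extension of $F$. So we may assume $\pi$, $p$, and $q$ are all anisotropic over $F$.

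Set $L = F(\pi\otimes q)$ and introduce the compositum $M = L(q)$. Since $F(q)\subseteq M$ and $p_{F(q)}$ is isotropic by hypothesis, $p_M$ is isotropic, and therefore $(\pi\otimes p)_M$ is isotropic. The goal is to descend this isotropy from $M$ down to $L$. The descent is painless when $q_L$ is isotropic: in that case $M/L = L(q)/L$ is purely transcendental, so an anisotropic form over $L$ cannot become isotropic over $M$, and we are done.

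The main obstacle is the remaining case where $q_L$ is anisotropic, so that $M/L$ is not purely transcendental and isotropy need not descend automatically. Here I would aim to exploit the Pfister-multiple structure. Theorem~\ref{WS} applied over $L$ (using that $(\pi\otimes q)_L$ is isotropic) yields a decomposition $(\pi\otimes q)_L\simeq \pi\otimes q_1'\perp \pi\otimes q_2'$ with $q_2'$ hyperbolic and $\pi\otimes q_1'$ anisotropic, so that $[q_L]-[q_1']$ lies in the annihilator of $\pi$ in $W(L)$ and $\dim q_1'<\dim q$. Arguing by contradiction, suppose $(\pi\otimes p)_L$ is anisotropic. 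Then Corollary~\ref{WScor} forces $i_1((\pi\otimes p)_L)\geqs \dim\pi$, while Theorem~\ref{km}, applied over $L$ to the Pfister multiple $\pi\otimes p$ and the anisotropic form $q_L$ (which becomes isotropic over $L(q)=M$), supplies the inequality $\dim(\pi\otimes p)-i_1((\pi\otimes p)_L)\geqs \dim q_L -i_1(q_L)$. The plan is to play these constraints off against the dimension information carried by the decomposition of $(\pi\otimes q)_L$ and against the Karpenko-Merkurjev inequality for the original isotropy $p_{F(q)}$, in order to derive a contradiction. The crux of the proof --- and the step I expect to be the hardest --- is extracting enough mileage from the Pfister-multiple structure to beat the general Karpenko-Merkurjev bound; capturing the subtle way in which $\pi\otimes q$ forces isotropy of $\pi\otimes p$ over $L$ even when $q_L$ stubbornly remains anisotropic is precisely what makes the theorem non-trivial.
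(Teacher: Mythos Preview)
The paper does not actually prove this theorem: it is quoted from Roussey's thesis as \cite[Th\'eor\`eme~6.4.2]{R}, with the remark that Roussey supplies several proofs there. So there is no in-paper argument against which to compare your proposal, and I assess the plan on its own terms.

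Your reductions are sound, and you have correctly isolated the crux: all of $\pi,p,q$ anisotropic over $F$ and $q_L$ still anisotropic over $L=F(\pi\otimes q)$. But your plan for this case is not an argument --- it is an inventory of inequalities, and they do not combine into a contradiction. Assuming $(\pi\otimes p)_L$ anisotropic, Theorem~\ref{km}~$(i)$ over $L$ (for $(\pi\otimes p)_L$ becoming isotropic over $L(q)$) gives $(\dim\pi)\dim p-i_1((\pi\otimes p)_L)\geqs\dim q-i_1(q_L)$, while Corollary~\ref{WScor} gives $i_1((\pi\otimes p)_L)\geqs\dim\pi$. Together these yield only $i_1(q_L)\geqs\dim q-(\dim\pi)(\dim p-1)$, which is vacuous as soon as $\dim p-1\geqs\dim q/\dim\pi$; and the Karpenko--Merkurjev inequality coming from the original hypothesis $p_{F(q)}$ isotropic bounds $\dim p$ from \emph{below}, so it pushes in the wrong direction. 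The decomposition of $(\pi\otimes q)_L$ from Theorem~\ref{WS} tells you that $q_L-q_1'$ lies in the annihilator of $\pi_L$ in $W(L)$, but you have proposed no device linking that Witt-kernel information to the isotropy of $(\pi\otimes p)_L$. The theorem genuinely requires more than index bookkeeping --- for instance, after reducing by induction on the number of slots to $\pi=\langle 1,a\rangle$, one can realise $L=F(q\perp aq)$ explicitly over a purely transcendental extension of $F$ and specialise the polynomial identity encoding ``$p$ isotropic over $F(q)$'', or one can run a place-theoretic argument exploiting the roundness of $\pi$ --- and that constructive step is absent from your outline. You are right that this is where the difficulty lies; the point is that the tools you have listed are not the ones that resolve it.
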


%{\small {\it
%MENTION THAT PROOF FOLLOWS FROM ISOTROPY RESULT maybe not}}

%With respect to the above theorem, 

%In a similar vein to the above, 

With regard to Theorem~\ref{R}, we note that the corresponding statement with respect to hyperbolicity also holds, having been established by Fitzgerald \cite[Theorem $3.2$]{F2}.

Our opening result, concerning products of $\pi$ with the higher kernel forms of $q$, establishes the aforementioned refinement of Corollary~\ref{WScor}.

\begin{theorem}\label{i1bound} Let $\pi$ be similar to an anisotropic Pfister form over $F$. Let $q$ be an anisotropic form over $F$ of dimension at least two. Let $F=F_0\subset F_1\subset \ldots \subset F_h$ denote the generic splitting tower of $q$ and $q\simeq q_0, q_1\ldots ,q_h$ the kernel forms of $q$. Suppose that $\pi\otimes q_j$ is anisotropic over $F_j$ for some fixed $j$ satisfying $0\leqs j\leqs h-1$. Then $i_1(\pi\otimes q_j)\geqs (\dim\pi)i_{j+1}(q)$.

In particular, if $\pi\otimes q$ is anisotropic over $F$, then $i_1(\pi\otimes q)\geqs (\dim\pi)i_1(q)$.
\end{theorem}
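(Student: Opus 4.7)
I will argue by induction on $\dim q$. As a preliminary step, the general $j$-indexed statement reduces to the special case $j=0$: since $q_j$ is anisotropic over $F_j$ with $i_1(q_j)=i_{j+1}(q)$, applying the special case with $F_j$ and $q_j$ in place of $F$ and $q$ immediately recovers the $j$-indexed statement. It therefore suffices to prove $i_1(\pi\otimes q)\geq(\dim\pi)\,i_1(q)$ whenever $\pi\otimes q$ is anisotropic over $F$.

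The base case ($\dim q=2$, where $i_1(q)=1$) is exactly Corollary~\ref{WScor}. For the inductive step, set $k:=i_1(q)\geq 2$ and choose any codimension-one subform $\widetilde q\subset q$. Since $\dim\widetilde q=\dim q-1\geq\dim q-k+1$, Lemma~\ref{star} makes $\widetilde q$ isotropic over $F(q)$; conversely, because $\widetilde q\subset q$, the form $q$ is isotropic over $F(\widetilde q)$. Theorem~\ref{km}(ii) then forces equality in the Karpenko--Merkurjev dimension inequality, yielding $i_1(\widetilde q)=k-1$. As $\pi\otimes\widetilde q$ is anisotropic (being a subform of the anisotropic $\pi\otimes q$) and $\dim\widetilde q<\dim q$, the inductive hypothesis applied to $\widetilde q$ delivers $i_1(\pi\otimes\widetilde q)\geq(k-1)\dim\pi$.

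For the final step, Roussey's theorem (Theorem~\ref{R}) transfers the isotropy of $\widetilde q$ over $F(q)$ into the isotropy of $\pi\otimes\widetilde q$ over $F(\pi\otimes q)$. Applying Theorem~\ref{km}(i) with $\pi\otimes\widetilde q$ in place of $p$ and $\pi\otimes q$ in place of $q$ (both anisotropic over $F$), one obtains $\dim(\pi\otimes\widetilde q)-i_1(\pi\otimes\widetilde q)\geq\dim(\pi\otimes q)-i_1(\pi\otimes q)$. Rearranging and using $\dim(\pi\otimes q)-\dim(\pi\otimes\widetilde q)=\dim\pi$ gives $i_1(\pi\otimes q)\geq\dim\pi+i_1(\pi\otimes\widetilde q)\geq\dim\pi+(k-1)\dim\pi=k\dim\pi$, completing the induction. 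The hardest conceptual step -- and the main obstacle in first finding the proof -- is recognising that both parts of Theorem~\ref{km} must be combined: part (ii) is what calibrates $i_1(\widetilde q)=k-1$ exactly, so that the inductive hypothesis has sufficient strength, and part (i) -- enabled by Roussey's transfer -- is what propagates the improved bound from $\pi\otimes\widetilde q$ back up to $\pi\otimes q$.
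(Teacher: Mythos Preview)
Your argument is correct, with one small presentational gap: in the inductive step you write ``set $k:=i_1(q)\geqs 2$'', but since the induction is on $\dim q$ you must also dispose of the case $i_1(q)=1$ when $\dim q\geqs 3$. This is of course immediate from Corollary~\ref{WScor}, and is exactly the line the paper opens with; just say so explicitly before restricting to $k\geqs 2$.

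Your route is a close cousin of the paper's rather than a genuinely different one: the same four ingredients (Lemma~\ref{star}, Theorem~\ref{R}, Theorem~\ref{km}, Corollary~\ref{WScor}) are used in the same roles. The difference is organisational. The paper drops in one step to a subform $q'\subset q_j$ of codimension $i_1(q_j)-1$, shows $\pi\otimes q'$ and $\pi\otimes q_j$ are isotropy equivalent, reads off the equality $i_1(\pi\otimes q_j)=i_1(\pi\otimes q')+(\dim\pi)(i_1(q_j)-1)$ from Theorem~\ref{km}, and then invokes Corollary~\ref{WScor} once. You instead peel off one dimension at a time, using Theorem~\ref{km}$(ii)$ to pin down $i_1(\widetilde q)=k-1$ and Theorem~\ref{km}$(i)$ (via Roussey) to climb back up; unfolding your induction recovers exactly the paper's single-step computation. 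The paper's version is a little cleaner since it avoids the induction scaffolding, while yours makes the role of Theorem~\ref{km}$(ii)$ in calibrating the subform's first Witt index more visible; substantively the two arguments are the same.
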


\begin{proof} We consider the anisotropic form $\pi\otimes q_j$ over $F_j$ for $j$ such that $0\leqs j\leqs h-1$. If $i_{j+1}(q)=1$, then the statement follows immediately from Corollary~\ref{WScor}. Hence, we may assume that $i_{j+1}(q) > 1$. Let $q'\subset q_j$ over $F_j$ of dimension $\dim q_j -i_1(q_j)+1$, whereby $q'$ is a proper subform of $q_j$ as $i_{j+1}(q)=i_1(q_j)$. Lemma~\ref{star} implies that $q'$ is isotropic over $F_j(q_j)$. Hence, $\pi\otimes q'$ is isotropic over $F_j(\pi\otimes q_j)$ by Theorem~\ref{R}. As $\pi\otimes q'\subset\pi\otimes q_j$, we have that $\pi\otimes q'$ is anisotropic over $F_j$ from our assumption and, furthermore, that $\pi\otimes q_j$ is isotropic over $F_j(\pi\otimes q')$, whereby $\pi\otimes q'$ and $\pi\otimes q_j$ are isotropy-equivalent forms over $F_j$. Invoking Theorem~\ref{km}~$(i)$, we have that $\dim(\pi\otimes q')-i_1(\pi\otimes q')= \dim(\pi\otimes q_j)-i_1(\pi\otimes q_j)$, whereby $$i_1(\pi\otimes q_j)=i_1(\pi\otimes q')+\dim\pi (\dim q_j -\dim q')=i_1(\pi\otimes q')+\dim\pi (i_1(q_j)- 1).$$ Since $i_1(\pi\otimes q')\geqs\dim\pi$ by Corollary~\ref{WScor}, we have that $i_1(\pi\otimes q_j)\geqs (\dim\pi)i_1(q_j)$, whereby the result follows.
\end{proof}

%TO DO: ADD NEW PARTIAL RESULTS HERE? OR LATER?

%CONSIDER WHETHER THE ABOVE REFORMULATION REALLY ADDS ANYTHING.

%With the exception of the last entry in the case where 

%We remark that Theorem~\ref{WS} implies that every entry of the splitting sequence of $\pi\otimes q$ is a multiple of $\dim\pi$, with the exception of the last entry in the case where $q$ is an odd-dimensional form. At present, we do not have an analogue of Theorem~\ref{i1bound} with respect to the 

In \cite{Totaro}, Totaro defined a neighbour of a multiple of a Pfister form $\pi$ to be a subform of the multiple of codimension less than $\dim \pi$. The next corollary suggests that this definition can be extended.

\begin{cor}\label{i1cor} Let $q$ a form of dimension at least two and $\pi$ similar to a Pfister form be such that $\pi\otimes q$ is anisotropic over $F$. If $p\subset \pi\otimes q$ over $F$ of codimension less than $(\dim\pi)i_1(q)$, then $p$ is isotropic over $F(\pi\otimes q)$.
\end{cor}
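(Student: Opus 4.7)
The plan is to combine the new bound from Theorem~\ref{i1bound} with the elementary subform-isotropy criterion Lemma~\ref{star}. Concretely, the hypothesis that $p\subset \pi\otimes q$ with codimension less than $(\dim\pi)i_1(q)$ translates into the inequality
\[
\dim p \;\geqs\; \dim(\pi\otimes q)-(\dim\pi)i_1(q)+1.
\]
On the other hand, Theorem~\ref{i1bound} gives $i_1(\pi\otimes q)\geqs (\dim\pi)i_1(q)$, which by definition of the first Witt index is exactly $i((\pi\otimes q)_{F(\pi\otimes q)})\geqs (\dim\pi)i_1(q)$.

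Passing to the field $F(\pi\otimes q)$, the inclusion $p\subset \pi\otimes q$ persists, and combining the two inequalities yields
\[
\dim p \;\geqs\; \dim(\pi\otimes q) - i\bigl((\pi\otimes q)_{F(\pi\otimes q)}\bigr)+1.
\]
Lemma~\ref{star} applied to $p_{F(\pi\otimes q)}\subset (\pi\otimes q)_{F(\pi\otimes q)}$ then forces $p_{F(\pi\otimes q)}$ to be isotropic, which is the desired conclusion.

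I do not expect any serious obstacle: the entire content of the corollary is to re-express Theorem~\ref{i1bound} in the language of neighbours by means of Lemma~\ref{star}, so once one has the bound $i_1(\pi\otimes q)\geqs (\dim\pi)i_1(q)$ the argument is a one-line dimension count. The only small point to be careful about is that a subform over $F$ remains a subform over $F(\pi\otimes q)$, and that no anisotropy assumption on $p$ is needed (an isotropic $p$ remains isotropic over every extension, so the conclusion is trivial in that case).
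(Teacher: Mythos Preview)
Your proof is correct and follows exactly the same approach as the paper: invoke Theorem~\ref{i1bound} to conclude that the codimension of $p$ in $\pi\otimes q$ is less than $i_1(\pi\otimes q)$, and then apply Lemma~\ref{star} over $F(\pi\otimes q)$. The paper's version is terser but the content is identical.
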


%As $i_{j+1}(q)=i_1(q_j)$, working over the field $F_j$ we see that the statement follows from Corollary~\ref{WScor} in the case where $i_{j+1}(q)=1$. 

%THIS IS A WEAKER STATEMENT THAN THE CONDITION IN TERMS OF $ i_1(\pi\otimes q)$, BUT REPRESENTS A PREVIOUSLY UNKNOWN FACT

\begin{proof} Theorem~\ref{i1bound} implies that $p$ is a subform of $\pi\otimes q$ of codimension less than $i_1(\pi\otimes q)$, whereby Lemma~\ref{star} implies that $p$ is isotropic over $F(\pi\otimes q)$. 
\end{proof}

We remark that Theorem~\ref{WS} implies that every higher Witt index of $\pi\otimes q$ is a multiple of $\dim\pi$, with the exception of $i_h(\pi\otimes q)$ in the case where $q$ is an odd-dimensional form. At present, we do not have an analogue of Theorem~\ref{i1bound} with respect to $i_r(\pi\otimes q)$ for $2\leqs r\leqs h$. In certain situations, we can establish upper bounds on the values of some higher Witt indices.

%these higher Witt indices.

\begin{prop}\label{i2} Let $q$ a form of dimension at least two and $\pi$ similar to a Pfister form be such that $\pi\otimes q$ is anisotropic over $F$. Let $F=F_0\subset F_1\subset \ldots \subset F_h$ denote the generic splitting tower of $q$ and $q\simeq q_0, q_1\ldots ,q_h$ the kernel forms of $q$. \begin{enumerate}[$(i)$] 
\item If $\pi\otimes q_1$ is anisotropic over $F_1$, then $i_1(\pi\otimes q)=(\dim\pi)i_1(q)$.
\smallskip
\item Suppose that $q$ is not a Pfister neighbour of codimension at most one and that $\pi\otimes q$ is not similar to a Pfister form, whereby $i_2(q)$ and $i_2(\pi\otimes q)$ are defined. If $\pi\otimes q_1$ is anisotropic over $F_1$ and $\pi\otimes q_2$ is anisotropic over $F_2$, then $i_1(\pi\otimes q)=(\dim\pi)i_1(q)$ and $i_2(\pi\otimes q)\leqs(\dim\pi)i_2(q)$.
\end{enumerate}
\end{prop}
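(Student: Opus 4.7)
For part $(i)$, Theorem~\ref{i1bound} already yields $i_1(\pi\otimes q)\geqs(\dim\pi)i_1(q)$, so I need only establish the reverse inequality. I would pass to the field $F_1=F(q)$: the Witt decomposition $q_{F_1}\simeq q_1\perp i_1(q)\<1,-1\>$ gives $(\pi\otimes q)_{F_1}\simeq \pi\otimes q_1\perp (\dim\pi)i_1(q)\<1,-1\>$, and the hypothesis that $\pi\otimes q_1$ is anisotropic over $F_1$ forces $i((\pi\otimes q)_{F_1})=(\dim\pi)i_1(q)$. Since $\pi\otimes q$ becomes isotropic over $F_1$, the basic bound $i_1(\pi\otimes q)\leqs i((\pi\otimes q)_{F_1})$ recorded in the introduction then delivers equality.

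The equality $i_1(\pi\otimes q)=(\dim\pi)i_1(q)$ in part $(ii)$ is subsumed by part $(i)$. For the main bound $i_2(\pi\otimes q)\leqs(\dim\pi)i_2(q)$, my strategy is to exhibit an extension $M$ of $L_1:=F(\pi\otimes q)$ over which $(\pi\otimes q)_1$ becomes isotropic with computable Witt index exactly $(\dim\pi)i_2(q)$; the same first-Witt-index inequality $i_2(\pi\otimes q)=i_1((\pi\otimes q)_1)\leqs i(((\pi\otimes q)_1)_M)$ will then close the argument. I would take $M=F_2(\pi\otimes q)$, which carries a canonical $F$-embedding of $L_1$: because $\dim(\pi\otimes q)\geqs 4$, the form is absolutely irreducible and both function fields arise as fraction fields of base-changed integral domains, so inclusion of scalars induces $L_1\hookrightarrow M$.

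The Witt-index computation then proceeds along two parallel routes through $M$. On one hand, since $q$ is isotropic over $F_2\supset F_1$, so is $\pi\otimes q$, and hence $M/F_2$ is purely transcendental and preserves Witt indices; combined with $q_{F_2}\simeq q_2\perp(i_1(q)+i_2(q))\<1,-1\>$ and the assumption that $\pi\otimes q_2$ is anisotropic over $F_2$, this yields $i((\pi\otimes q)_M)=(\dim\pi)(i_1(q)+i_2(q))$. On the other hand, extending $(\pi\otimes q)_{L_1}\simeq(\pi\otimes q)_1\perp i_1(\pi\otimes q)\<1,-1\>$ to $M$ and subtracting Witt indices, using the value of $i_1(\pi\otimes q)$ supplied by part $(i)$, gives $i(((\pi\otimes q)_1)_M)=(\dim\pi)i_2(q)$. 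Since $i_2(q)\geqs 1$, this quantity is positive, so $(\pi\otimes q)_1$ is isotropic over $M$, and the first-Witt-index bound concludes. The only subtlety I expect is setting up the canonical inclusion $L_1\subset M$ and verifying that the two base-change routes to $M$ produce the same form over $M$, both of which are standard function-field manipulations.
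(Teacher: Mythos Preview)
Your argument is correct and, for part $(i)$, identical to the paper's. For part $(ii)$ the paper takes a marginally shorter path: rather than building the composite field $M=F_2(\pi\otimes q)$ and embedding $L_1$ into it, it simply computes $i((\pi\otimes q)_{F_2})=(\dim\pi)(i_1(q)+i_2(q))$ over $F_2$ directly and then invokes Knebusch's generic splitting theorem (the possible Witt indices over extensions lie in the splitting set) to conclude from $i((\pi\otimes q)_{F_2})>i_1(\pi\otimes q)$ that $i_1(\pi\otimes q)+i_2(\pi\otimes q)\leqs i((\pi\otimes q)_{F_2})$, which rearranges to the desired bound. Your construction of $M$ is in effect a hands-on unwinding of that same generic-splitting step, so the two arguments are really the same; the paper's version just saves you the function-field embedding bookkeeping you flagged at the end.
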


\begin{proof} $(i)$ Considering the extension of $\pi\otimes q$ to the field $F_1$, we have that $$(\pi\otimes q)_{F_1}\simeq \pi_{F_1}\otimes (i_1(q)\x \< 1,-1\>_{F_1} \perp q_1)\simeq ((\dim\pi)i_1(q))\x \< 1,-1\>_{F_1}\perp (\pi_{F_1}\otimes q_1),$$ with $\pi_{F_1}\otimes q_1$ being anisotropic by assumption. As $F(\pi\otimes q)$ is the generic zero field of $\pi\otimes q$, we thus have that $i_1(\pi\otimes q)\leqs (\dim\pi)i_1(q)$. Invoking Theorem~\ref{i1bound}, we also have that $i_1(\pi\otimes q)\geqs (\dim\pi)i_1(q)$, whereby the result follows.

$(ii)$ As forms of height one are necessarily Pfister neighbours of codimension at most one, by \cite[Theorem 5.8]{Kn1} (independently proved by Wadsworth \cite{W}), we have that $q$ and $\pi\otimes q$ are forms of height at least two, whereby $i_2(q)$ and $i_2(\pi\otimes q)$ are defined. Considering the extension of $\pi\otimes q$ to the field $F_2$, we have that $$(\pi\otimes q)_{F_2}\simeq \pi_{F_2}\otimes (\dim\pi (i_1(q)+i_2(q))\x \< 1,-1\>_{F_2} \perp q_2),$$ with $\pi_{F_2}\otimes q_2$ being anisotropic by assumption. As $i_1(\pi\otimes q)=(\dim\pi)i_1(q)$ by $(i)$, we thus have that $i_1(\pi\otimes q)< i((\pi\otimes q)_{F_2})=\dim\pi (i_1(q)+i_2(q))$, whereby $i_2(\pi\otimes q)\leqs(\dim\pi)i_2(q)$.
\end{proof}

%\simeq (\dim\pi(i_1(q)+i_2(q)))\x \< 1,-1\>_{F_2}\perp (\pi_{F_2}\otimes q_2)

%As $\dim(\pi\otimes q)-\dim p< (\dim\pi)i_1(q)$, invoking Theorem~\ref{i1bound} we have that $\dim(\pi\otimes q)-\dim p< i_1(\pi\otimes q)$, whereby Lemma~\ref{star} implies the result.

%With respect to the above theorem, the following example shows that, in general, we do not have that $i_1(\pi\otimes q)= (\dim\pi)i_1(q)$ when $\pi\otimes q$ is anisotropic over $F$.

%%%%%With respect to the above theorem, the following example shows that $i_1(\pi\otimes q)$ can exceed $(\dim\pi)i_1(q)$. Furthermore, this allows us to demonstrate that the converse of Theorem~\ref{R} does not hold in general.
%%%%%
%%%%%\begin{example} Let $q\simeq\< 1,1,1,7\>$ and $\pi\simeq\< 1,1,1,1\>$ over $F=\mathbb{Q}$. 
%%%%%
%%%%%Since $\det q\notin \mathbb{F}^2$, $q$ is not similar to a $2$-fold Pfister form over $F$, whereby $i_1(q)=1$. As $7\in D_{\mathbb{F}}(\pi)$, we have that $7\< 1,1,1,1\>\simeq\< 1,1,1,1\>$, and thus that $\pi\otimes q\simeq 16\x\< 1\>$. Hence, $i_1(\pi\otimes q)=8 >(\dim\pi)i_1(q)=4$.
%%%%%
%%%%%As $i_1(q)=1$, Theorem~\ref{km}~$(i)$ implies that $\< 1,1,1\>$ is anisotropic over $F(q)$. However, $\pi\otimes\< 1,1,1\>$ is a Pfister neighbour of $\pi\simeq\< 1,1,1,1\>$, whereby $\pi\otimes\< 1,1,1\>$ is isotropic over $F(\pi\otimes q)$.
%%%%%\end{example}

%Theorem~\ref{i1bound} and Proposition~\ref{i2}

With respect to the preceding results, we note the existence of forms $q$ and $\pi$ over $F$ such that the value of $i_1(\pi\otimes q)$ is strictly greater than $(\dim\pi)i_1(q)$. The following example, communicated to me by Karim Becher, can be used to demonstrate this. We will also use this example to show that the converses of Theorem~\ref{R} and \cite[Theorem $3.2$]{F2} do not hold in general.

%\begin{example}\label{exceed} As in Example~\ref{nocon}, let $q\simeq\< 1,1,1,7\>$ and $\pi\simeq\< 1,1,1,1\>$ over $F=\mathbb{Q}$. As before, $i_1(q)=1$ and $\pi\otimes q$ is isometric to the Pfister form $16\x\< 1\>$. Hence, we have that $i_1(\pi\otimes q)=8 >(\dim\pi)i_1(q)=4$.
%\end{example}

\begin{example}\label{nocon} Let $q\simeq\< 1,1,1,7\>$ and $\pi\simeq\< 1,1,1,1\>$ over $F=\mathbb{Q}$. Since $\det q\notin \mathbb{Q}^2$, the form $q$ is not similar to a $2$-fold Pfister form. Hence, by invoking the Cassels-Pfister Subform Theorem [L, Ch.X, Theorem 4.5], we may conclude that $q$ is not hyperbolic over $\mathbb{Q}(\pi)$. Moreover, as a consequence of \cite[Corollary $23.4$]{EKM}, it follows that $i_1(q)=1$. Hence, the form $\< 1,1,1\>$ is anisotropic over $\mathbb{Q}(q)$ by Theorem~\ref{km}~$(i)$. As $7\in D_{\mathbb{Q}}(\pi)$, we have that $7\< 1,1,1,1\>\simeq\< 1,1,1,1\>$, and thus that $q\otimes \pi\simeq 16\x\< 1\>$. Hence, we have that $i_1(\pi\otimes q)=8 >(\dim\pi)i_1(q)=4$. Moreover, the Pfister form $q\otimes \pi$ is hyperbolic over $\mathbb{Q}(\pi\otimes\pi)$. Furthermore, as $\< 1,1,1\>\otimes\pi$ is a Pfister neighbour of $16\x\< 1\>$, we have that $\< 1,1,1\>\otimes\pi$ is isotropic over $\mathbb{Q}(q\otimes \pi)$.
\end{example}

%As per Example~\ref{nocon}

As above, the converse to Theorem~\ref{R} does not hold in general. The following result places a necessary condition on situations where this converse holds with respect to all forms over $F$.

\begin{prop}\label{funnycor1} Let $q$ be a form of dimension at least two such that $\pi\otimes q$ is anisotropic over $F$, where $\pi$ is similar to a Pfister form. For all forms $p$ over $F$ such that $\pi\otimes p$ is anisotropic over $F$, suppose that $p$ is isotropic over $F(q)$ whenever $\pi\otimes p$ is isotropic over $F(\pi\otimes q)$. Then $i_1(\pi\otimes q)=\dim\pi(i_1(q))$.
\end{prop}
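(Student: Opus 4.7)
The plan is to derive the reverse inequality $(\dim\pi)\,i_1(q)\geqs i_1(\pi\otimes q)$, which together with Theorem~\ref{i1bound} will yield the claimed equality. The strategy is to apply the contrapositive of the stated hypothesis to a well-chosen subform of $q$.

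I take $p\subset q$ over $F$ to be a subform of codimension $i_1(q)$, so that $\dim p=\dim q-i_1(q)$. Provided $\dim q\geqs 3$, we have $\dim p\geqs 2$, so $p$ is anisotropic (as a subform of the anisotropic form $q$) of dimension at least two, giving $i_1(p)\geqs 1$. Consequently $\dim p-i_1(p)\leqs \dim p-1<\dim q-i_1(q)$, and the contrapositive of Theorem~\ref{km}~(i) forces $p$ to be anisotropic over $F(q)$. The edge case $\dim q=2$ is handled directly: here $q$ is similar to a $1$-fold Pfister form with $i_1(q)=1$, and the constraints that $i_1(\pi\otimes q)$ is a multiple of $\dim\pi$, at least $\dim\pi$ by Theorem~\ref{i1bound}, and at most $\tfrac{1}{2}\dim(\pi\otimes q)=\dim\pi$ jointly force $i_1(\pi\otimes q)=\dim\pi=(\dim\pi)\,i_1(q)$.

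Since $\pi\otimes p\subset \pi\otimes q$ is a subform of the anisotropic form $\pi\otimes q$, it is anisotropic over $F$. Applying the contrapositive of the stated hypothesis to $p$ then yields that $\pi\otimes p$ is anisotropic over $F(\pi\otimes q)$. Applying Lemma~\ref{star} contrapositively to the inclusion $\pi\otimes p\subset \pi\otimes q$ over $F(\pi\otimes q)$ now forces $\dim(\pi\otimes p)\leqs \dim(\pi\otimes q)-i_1(\pi\otimes q)$; that is, the codimension $(\dim\pi)\,i_1(q)$ of $\pi\otimes p$ in $\pi\otimes q$ is at least $i_1(\pi\otimes q)$, which is the desired reverse inequality.

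The one substantive step is securing the anisotropy of $p$ over $F(q)$, which rests on Theorem~\ref{km}~(i) invoked via the almost-tautological inequality $i_1(p)\geqs 1$ for any anisotropic form of dimension at least two; the remainder of the argument is a routine chaining of Lemma~\ref{star} with the contrapositive of the hypothesis.
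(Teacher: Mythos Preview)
Your proof is correct and follows essentially the same approach as the paper: choose $p\subset q$ of codimension $i_1(q)$, use Theorem~\ref{km}~$(i)$ to see that $p$ stays anisotropic over $F(q)$, and then combine the hypothesis with Lemma~\ref{star} to bound $i_1(\pi\otimes q)$ above by $(\dim\pi)\,i_1(q)$; the paper phrases this as a contradiction rather than a direct inequality, but the content is identical. Your explicit treatment of the edge case $\dim q=2$ is a minor addition --- the paper's citation of Theorem~\ref{km}~$(i)$ tacitly assumes $\dim p\geqslant 2$, though the one-dimensional case is trivial since such a $p$ is always anisotropic.
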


%
%......
%
%consider $p\simeq q'$ for appropriate subfrom

%\begin{prop}\label{funnycor1} Let $q$ be a form of dimension at least two such that $q\otimes \pi$ is anisotropic over $F$, where $\pi$ is similar to a Pfister form. Suppose that $\pi\otimes p$ is isotropic over $F(\pi\otimes q)$ and $p$ is isotropic over $F(q)$. Then $i_1(q\otimes \pi)=\dim\pi(i_1(q))$.
%\end{prop}

\begin{proof} Invoking Theorem~\ref{i1bound}, we have that $i_1(\pi\otimes q)\geqs\dim\pi(i_1(q))$. Suppose, for the sake of contradiction, that $i_1(\pi\otimes q) >\dim\pi(i_1(q))$. Let $p\subset q$ of codimension $i_1(q)$, whereby $p$ is anisotropic over $F(q)$ by Theorem~\ref{km}$(i)$. However, as $\pi\otimes p\subset \pi\otimes q$ of codimension $\dim\pi(i_1(q))$, we have that $\pi\otimes p$ is isotropic over $F(\pi\otimes q)$ by Lemma~\ref{star}, a contradiction. Hence $i_1(\pi\otimes q)=\dim\pi(i_1(q))$.
\end{proof}

Incorporating the above necessary condition, the next result establishes the converse of Theorem~\ref{R} with the aid of one additional assumption.

\begin{prop}\label{funnycor2} Let $p$ and $q$ be forms of dimension at least two such that $\pi\otimes p$ and $\pi\otimes q$ are anisotropic over $F$, where $\pi$ is similar to a Pfister form. Suppose that $i_1(\pi\otimes q)=\dim\pi(i_1(q))$ and that $q$ is isotropic over $F(p)$. If $\pi\otimes p$ is isotropic over $F(\pi\otimes q)$, then $p$ is isotropic over $F(q)$. 
\end{prop}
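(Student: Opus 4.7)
The plan is to establish the equality $\dim p - i_1(p) = \dim q - i_1(q)$ and then conclude by invoking the ``if'' direction of Theorem~\ref{km}$(ii)$ applied to the isotropy of $q$ over $F(p)$. The equality will be sandwiched from two applications of Theorem~\ref{km}$(i)$: one to $\pi\otimes p$ over $F(\pi\otimes q)$, the other to $q$ over $F(p)$.

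First I would apply Theorem~\ref{km}$(i)$ to the hypothesis that $\pi\otimes p$ is isotropic over $F(\pi\otimes q)$, obtaining
\[
(\dim\pi)\dim p - i_1(\pi\otimes p) \;\geqs\; (\dim\pi)\dim q - i_1(\pi\otimes q).
\]
Substituting $i_1(\pi\otimes q) = (\dim\pi) i_1(q)$ on the right-hand side, and bounding $i_1(\pi\otimes p) \geqs (\dim\pi) i_1(p)$ on the left via Theorem~\ref{i1bound} (which is applicable because $\pi\otimes p$ is anisotropic over $F$), I would deduce
\[
(\dim\pi)(\dim p - i_1(p)) \;\geqs\; (\dim\pi)(\dim q - i_1(q)),
\]
and hence $\dim p - i_1(p) \geqs \dim q - i_1(q)$ after cancelling $\dim\pi$.

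For the reverse inequality, I would invoke the hypothesis that $q$ is isotropic over $F(p)$. A second application of Theorem~\ref{km}$(i)$ then yields $\dim q - i_1(q) \geqs \dim p - i_1(p)$, so equality must hold. With this equality in hand and $q$ isotropic over $F(p)$, Theorem~\ref{km}$(ii)$ delivers the desired conclusion that $p$ is isotropic over $F(q)$.

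I do not anticipate any substantive obstacle in carrying this out: the argument is essentially a two-sided application of Karpenko-Merkurjev. The only real conceptual point is to recognise, in line with Proposition~\ref{funnycor1}, that the hypothesis $i_1(\pi\otimes q) = (\dim\pi) i_1(q)$ is precisely what is needed to strip off the factor of $\dim\pi$ and transfer the Karpenko-Merkurjev dichotomy from the Pfister multiples $\pi\otimes p$, $\pi\otimes q$ to the forms $p$ and $q$ themselves.
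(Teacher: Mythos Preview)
Your proposal is correct and follows essentially the same argument as the paper: apply Theorem~\ref{km}$(i)$ to $\pi\otimes p$ over $F(\pi\otimes q)$ together with Theorem~\ref{i1bound} and the hypothesis on $i_1(\pi\otimes q)$ to obtain $\dim p - i_1(p)\geqs \dim q - i_1(q)$, then apply Theorem~\ref{km}$(i)$ to $q$ over $F(p)$ for the reverse inequality, and conclude via Theorem~\ref{km}$(ii)$.
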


\begin{proof} Suppose that $\pi\otimes p$ is isotropic over $F(\pi\otimes q)$. Invoking Theorem~\ref{km}$(i)$, we have that $\dim (\pi\otimes p)-i_1(\pi\otimes p)\geqs \dim\pi(\dim q-i_1(q))$, whereby it follows that $i_1(\pi\otimes p)\leqs \dim\pi(\dim p-\dim q+i_1(q))$. Invoking Theorem~\ref{i1bound}, we have that $\dim\pi (i_1(p))\leqs \dim\pi(\dim p-\dim q+i_1(q))$. As $q$ is isotropic over $F(p)$ by assumption, we have that $\dim q-i_1(q)\geqs \dim p-i_1(p)$ by Theorem~\ref{km}$(i)$. Hence, $\dim q-i_1(q)= \dim p-i_1(p)$ , whereby $p$ is isotropic over $F(q)$ by Theorem~\ref{km}$(ii)$.
\end{proof}

%
%Since $q$ has maximal splitting, it follows that $\dim q-i_1(q)=2^n$ for some integer $n\geqs 0$. As $\pi\otimes p$ is isotropic over $F(\pi\otimes q)$, Theorem~\ref{H95} enables us to conclude that $\dim p>2^n$. Since $q$ is isotropic over $F(p)$, Theorem~\ref{km}~$(i)$ implies that $\dim p-i_1(p)=2^n$, whereby Theorem~\ref{km}~$(ii)$ implies that $p$ is isotropic over $F(q)$. 

The preceding results provide some additional motivation for determining when the equality $i_1(\pi\otimes q)=\dim\pi(i_1(q))$ holds, a question which naturally arises in light of Theorem~\ref{i1bound}. Our next results establish conditions on the form $q$ that ensure that this equality holds. Over real fields, we can establish the following statement.

\begin{prop}\label{pivalues} Let $q$ a form of dimension at least two and $\pi$ similar to a Pfister form be such that $\pi\otimes q$ is anisotropic over a real field $F$. Let $P$ be an ordering of $F$ such that $\pi$ is definite at $P$ and $q$ is indefinite at $P$, whereby $\abs{\sgn_P(q)} \leqs \dim q-2i_1(q)$. If $\abs{\sgn_P(q)} = \dim q-2i_1(q)$, then $i_1(\pi\otimes q)= (\dim\pi)i_1(q)$.
\end{prop}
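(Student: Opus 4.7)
The plan is to combine the lower bound $i_1(\pi\otimes q)\geqs(\dim\pi)i_1(q)$ from Theorem~\ref{i1bound} with a matching upper bound obtained via Proposition~\ref{i2}~$(i)$. Writing $F_1=F(q)$ and letting $q_1$ denote the first higher kernel form of $q$, it suffices to show that $\pi_{F_1}\otimes q_1$ is anisotropic over $F_1$, since Proposition~\ref{i2}~$(i)$ then yields the desired equality $i_1(\pi\otimes q)=(\dim\pi)i_1(q)$.

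To verify this anisotropy, I would exploit the ordering $P$ as follows. Since $q$ is indefinite at $P$, Theorem~\ref{ELW} supplies an extension $P_1$ of $P$ to $F_1$. Signatures are invariant under extensions of orderings and the hyperbolic plane has trivial signature, so the Witt decomposition $q_{F_1}\simeq i_1(q)\x\langle 1,-1\rangle\perp q_1$ yields $\sgn_{P_1}(q_1)=\sgn_P(q)$. The hypothesis $\abs{\sgn_P(q)}=\dim q-2i_1(q)=\dim q_1$ therefore gives $\abs{\sgn_{P_1}(q_1)}=\dim q_1$, so $q_1$ is definite at $P_1$. Similarly, $\pi_{F_1}$ inherits definiteness at $P_1$ from the definiteness of $\pi$ at $P$.

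Signatures are multiplicative on tensor products, and hence
$$\abs{\sgn_{P_1}(\pi_{F_1}\otimes q_1)}=\abs{\sgn_{P_1}(\pi_{F_1})}\cdot\abs{\sgn_{P_1}(q_1)}=(\dim\pi)(\dim q_1)=\dim(\pi_{F_1}\otimes q_1),$$
so $\pi_{F_1}\otimes q_1$ is definite at $P_1$, and in particular anisotropic over $F_1$. Applying Proposition~\ref{i2}~$(i)$ then delivers $i_1(\pi\otimes q)=(\dim\pi)i_1(q)$, as required.

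The conceptual point is that the equality $\abs{\sgn_P(q)}=\dim q-2i_1(q)$ is tailored precisely to force the anisotropic kernel $q_1$ to realise its full dimension as signature at the extended ordering $P_1$, which makes $q_1$ and hence $\pi_{F_1}\otimes q_1$ definite. The trickiest step conceptually is identifying Proposition~\ref{i2}~$(i)$ as the correct reduction tool; once that is done, no delicate function-field isotropy argument is required, and the remaining work is routine bookkeeping ensuring that the extension of $P$ to $F_1$ provided by Theorem~\ref{ELW} interacts correctly with the signature computation, which it does by naturality.
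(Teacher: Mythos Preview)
Your proof is correct. Both your argument and the paper's hinge on the Elman--Lam--Wadsworth extension theorem and a signature computation, but you route the upper bound differently: you extend $P$ to $F_1=F(q)$, show that $\pi_{F_1}\otimes q_1$ is definite at the extended ordering (hence anisotropic), and then invoke Proposition~\ref{i2}~$(i)$; the paper instead extends $P$ directly to $K=F(\pi\otimes q)$ and reads off the inequality $i_1(\pi\otimes q)\leqs(\dim\pi)i_1(q)$ from the fact that the anisotropic part of $(\pi\otimes q)_K$ must have dimension at least $\abs{\sgn_P(\pi\otimes q)}=\dim\pi(\dim q-2i_1(q))$. Your detour through Proposition~\ref{i2}~$(i)$ is slightly less direct but perfectly valid, and it has the small conceptual payoff of explicitly identifying \emph{why} the bound is attained---namely, that the first kernel form remains anisotropic after tensoring with $\pi$ over $F_1$.
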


\begin{proof} As $q$ is indefinite at $P$, Theorem~\ref{ELW} implies that $P$ extends to $F(q)$, whereby it follows that $\abs{\sgn_P(q)} \leqs \dim q-2i_1(q)$. By assumption, we have that $\pi$ is (positive) definite at $P$ and $\abs{\sgn_P(q)} = \dim q-2i_1(q)$, whereby it follows that $$\abs{\sgn_P(\pi\ox q)} =\dim\pi( \dim q-2i_1(q)).$$ Hence, Theorem~\ref{ELW} implies that $P$ extends to $K=F(\pi\ox q)$. Over $K$, $(\pi\ox q)_K\simeq ((\pi\ox q)_{K})_{\textrm{an}}\perp i_1(\pi\ox q)\x\< 1,-1\>_K$, whereby a comparison of signatures with respect to $P$ yields that $i_1(\pi\ox q)\leqs (\dim\pi)i_1(q)$. Invoking Theorem~\ref{i1bound}, we also have that $i_1(\pi\ox q)\geqs (\dim\pi)i_1(q)$, whereby the result follows.
\end{proof}

%\frac {\dim(\pi\ox q) -\abs{\sgn_P(\pi\otimes q)}} 2$. Thus, $i_1(\pi\ox q)\leqs

In the preceding result, we established that the value of $i_1(\pi\otimes q)$ coincides with our lower bound when $i_1(q)$ is maximal with respect to the signature of $q$ at an ordering $P$. In a similar spirit, if $i_1(q)$ is maximal with respect to the dimension of $q$, we can also establish this equality.

%In the case where the form $q$ has maximal splitting, the value of $i_1(\pi\otimes q)$ again coincides with our lower bound.

%we can also precisely determine

%CONTEXT: HERE, $i_1(q)$ IS MAXIMAL WITH RESPECT TO DIMENSION

\begin{prop}\label{i1maxspl} Let $q$ a form of dimension at least two and $\pi$ similar to a Pfister form be such that $\pi\otimes q$ is anisotropic over $F$. If $q$ has maximal splitting, then $i_1(\pi\otimes q)= (\dim\pi)i_1(q)$.
\end{prop}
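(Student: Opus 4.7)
The plan is to establish the upper bound $i_1(\pi\otimes q) \leqs (\dim\pi)\,i_1(q)$; combined with Theorem~\ref{i1bound}, this yields the desired equality. Write $\dim\pi = 2^m$ and use the maximal splitting hypothesis to write $\dim q - i_1(q) = 2^n$. I would then choose a subform $q'\subset q$ of dimension $2^n$, obtained by deleting $i_1(q)$ entries from a diagonalisation of $q$. Then $\pi\otimes q' \subset \pi\otimes q$ is anisotropic over $F$ and has dimension $2^{m+n}$; since $\dim(\pi\otimes q) = 2^{m+n} + 2^m i_1(q)$ exceeds $2^{m+n}$, Theorem~\ref{H95} guarantees that $\pi\otimes q'$ remains anisotropic over $K = F(\pi\otimes q)$.

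The contrapositive of Lemma~\ref{star} asserts that an anisotropic subform $p$ of a form $\varphi$ satisfies $\dim p \leqs \dim\varphi - i(\varphi)$. Applied to $(\pi\otimes q')_K \subset (\pi\otimes q)_K$, whose Witt index is $i_1(\pi\otimes q)$, this yields
\[
2^{m+n} \;\leqs\; \dim(\pi\otimes q) - i_1(\pi\otimes q) \;=\; 2^{m+n} + 2^m i_1(q) - i_1(\pi\otimes q),
\]
and hence $i_1(\pi\otimes q) \leqs 2^m i_1(q) = (\dim\pi)\,i_1(q)$, as required.

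The main obstacle is identifying the correct subform to which Hoffmann's criterion can be applied. Maximal splitting is precisely what renders $\dim q - i_1(q)$ a power of two, and so makes $\dim(\pi\otimes q') = 2^{m+n}$ the largest power of two strictly less than $\dim(\pi\otimes q)$; this is the numerical alignment demanded by Theorem~\ref{H95} in order to preserve anisotropy after passage to $K$. Example~\ref{nocon} illustrates that without this hypothesis the same argument collapses and the lower bound of Theorem~\ref{i1bound} need not be sharp.
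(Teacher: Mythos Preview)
Your proof is correct and follows essentially the same route as the paper: use Theorem~\ref{i1bound} for the lower bound, then pick a subform of $\pi\otimes q$ of dimension $2^{m+n}$, apply Hoffmann's theorem (Theorem~\ref{H95}) to keep it anisotropic over $F(\pi\otimes q)$, and use Lemma~\ref{star} to derive the upper bound. The only cosmetic differences are that the paper takes an arbitrary $2^{m+n}$-dimensional subform $\vartheta\subset\pi\otimes q$ rather than your structured choice $\pi\otimes q'$, and frames the final step as a contradiction rather than via the contrapositive of Lemma~\ref{star}.
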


\begin{proof} Let $\dim q= 2^n+k$ for some integers $n$ and $k$ such that $0< k\leqs 2^n$. Hence, $\dim(\pi\otimes q)=2^n\dim\pi+k\dim\pi$, where $0< k\dim\pi\leqs 2^n\dim\pi$. As $i_1(q)=k$, 
Theorem~\ref{i1bound} implies that $i_1(\pi\otimes q)\geqs k\dim\pi$. Let $\vartheta\subset\pi\otimes q$ over $F$ such that $\dim\vartheta =2^n\dim\pi$. If $i_1(\pi\otimes q)> k\dim\pi$, then Lemma~\ref{star} implies that $\vartheta$ is isotropic over $F(\pi\otimes q)$, contradicting Theorem~\ref{H95}. Thus, $i_1(\pi\otimes q)= (\dim\pi)i_1(q)$.
\end{proof}

%, whereby $\pi\otimes q$ has maximal splitting

%NOTE THAT THE FOLLOWING IMPROVES UPON IZH'S RESULT FOR $q$ of dimension $2^m+1$ in \cite[Corollary $8.9$]{I}
%
%NOTE: \cite[Lemma $8.7$]{I} AND \cite[Corollary $8.8$]{I} AND \cite[Corollary $8.9$]{I} ARE ALSO OF RELEVANCE TO MY MAIN THEOREM

\begin{cor}\label{maxsplcor} Let $q$ a form of dimension at least two and $\pi$ similar to a Pfister form be such that $\pi\otimes q$ is anisotropic over $F$. If $q$ has maximal splitting, then $\pi\otimes q$ has maximal splitting.
\end{cor}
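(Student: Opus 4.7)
The plan is to combine the hypothesis that $q$ has maximal splitting with Proposition~\ref{i1maxspl} to compute $\dim(\pi\otimes q) - i_1(\pi\otimes q)$ explicitly and observe that it is a power of two.

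First I would invoke the definition of maximal splitting for $q$: by assumption $\dim q - i_1(q) = 2^n$ for some integer $n\geqs 0$. Next, since $\pi$ is similar to a Pfister form, its dimension is a power of two, say $\dim\pi = 2^m$. Applying Proposition~\ref{i1maxspl} to the anisotropic form $\pi\otimes q$, we obtain
\[
i_1(\pi\otimes q) = (\dim\pi)\, i_1(q).
\]
Consequently,
\[
\dim(\pi\otimes q) - i_1(\pi\otimes q) = (\dim\pi)\dim q - (\dim\pi) i_1(q) = (\dim\pi)\bigl(\dim q - i_1(q)\bigr) = 2^{m+n},
\]
which is a power of two. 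Hence $\pi\otimes q$ has maximal splitting.

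There is essentially no obstacle here: the corollary is a direct bookkeeping consequence of Proposition~\ref{i1maxspl}, the hypothesis on $q$, and the fact that the dimension of any Pfister form (and therefore of any form similar to one) is a power of two. The entire substance of the result lies in the preceding proposition.
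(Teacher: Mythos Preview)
Your proof is correct and essentially identical to the paper's own argument: both invoke Proposition~\ref{i1maxspl} to obtain $\dim(\pi\otimes q)-i_1(\pi\otimes q)=(\dim\pi)(\dim q - i_1(q))$ and then observe that this is a product of two powers of two. The only difference is notational (your $m,n$ versus the paper's $n,k$).
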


\begin{proof} Proposition~\ref{i1maxspl} implies that $\dim(\pi\otimes q)-i_1(\pi\otimes q)=\dim\pi(\dim q-i_1(q))$. Since $\dim q-i_1(q)=2^k$ for some integer $k\geqs 0$, it follows that $\dim(\pi\otimes q)-i_1(\pi\otimes q)=2^{n+k}$ for some $n\in\mathbb{N}$. Hence, $\pi\otimes q$ has maximal splitting.
\end{proof}

%Since $q$ has maximal splitting

%As follows from the definition, an anisotropic form over $F$ of dimension at least two has maximal splitting if and only if its dimension minus its first Witt index is a $2$-power. 

Corollary~\ref{maxsplcor} was previously known to hold in the case where $\dim q=2^n+1$ for some $n\in\mathbb{N}$, where the statement follows through combining Theorem~\ref{H95} with Corollary~\ref{WScor} (see \cite[Corollary 8.9]{I}). 

Letting $\tau$ be a neighbour of a Pfister form $\pi$, we note that the statements of Proposition~\ref{i1maxspl} and Corollary~\ref{maxsplcor} hold with respect to the product $\tau\otimes q$ in the case where the codimension of $\tau\otimes q$ as a subform of $\pi\otimes q$ is less than $i_1(\pi\otimes q)$. As it can be difficult to determine the exact value of $i_1(\pi\otimes q)$ for a prescribed form $q$, we will invoke Theorem~\ref{i1bound} to express this observation in terms of $i_1(q)$.

%a user-friendly formulation.

%DO I WANT TO SAY RESULTS HERE?

\begin{cor}\label{PNmaxsplcor} Let $q$ a form of dimension at least two and $\pi$ similar to a Pfister form be such that $\pi\otimes q$ is anisotropic over $F$. Let $\tau$ be a neighbour of $\pi$ such that $\dim\tau>\dim\pi-\frac{(\dim\pi)i_1(q)}{\dim q}$. If $q$ has maximal splitting, then $i_1(\tau\otimes q)=(\dim\pi)i_1(q)-(\dim q)(\dim\pi-\dim \tau)$, whereby $\tau\otimes q$ has maximal splitting.
\end{cor}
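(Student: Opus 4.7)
The plan is to reduce the statement to an application of Theorem~\ref{km} combined with the exact value of $i_1(\pi \otimes q)$ supplied by Proposition~\ref{i1maxspl}. Since $\tau$ is a neighbour of $\pi$, we have $\tau \perp \gamma \simeq a\pi$ for some $a \in F^{\times}$, whereby $\tau \otimes q$ sits as a subform of $a\pi \otimes q \simeq a(\pi \otimes q)$. In particular $\tau \otimes q$ is anisotropic. The codimension of $\tau \otimes q$ inside $a\pi \otimes q$ is exactly $(\dim q)(\dim\pi - \dim\tau)$, and the hypothesis $\dim\tau > \dim\pi - \frac{(\dim\pi)i_1(q)}{\dim q}$ is precisely designed to make this codimension strictly less than $(\dim\pi)i_1(q)$.

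Because $q$ has maximal splitting, Proposition~\ref{i1maxspl} gives $i_1(\pi \otimes q) = (\dim\pi)i_1(q)$; since $F(a\pi\otimes q) = F(\pi\otimes q)$, this is also the first Witt index of $a\pi\otimes q$. Lemma~\ref{star} therefore guarantees that $\tau \otimes q$ is isotropic over $F(\pi\otimes q)$. Conversely, $\pi \otimes q$ becomes isotropic over $F(\tau \otimes q)$ because $\tau \otimes q$ is a subform of a form similar to $\pi \otimes q$. Hence $\tau \otimes q$ and $\pi \otimes q$ are isotropy equivalent over $F$.

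Applying Theorem~\ref{km}(i) in both directions yields
\[
\dim(\tau \otimes q) - i_1(\tau \otimes q) = \dim(\pi \otimes q) - i_1(\pi \otimes q),
\]
and solving for $i_1(\tau \otimes q)$ produces the stated formula $i_1(\tau \otimes q) = (\dim\pi)i_1(q) - (\dim q)(\dim\pi - \dim\tau)$. For the maximal splitting conclusion, Corollary~\ref{maxsplcor} asserts that $\pi \otimes q$ has maximal splitting, so the common value $\dim(\pi\otimes q) - i_1(\pi\otimes q)$ is a power of two. The displayed identity then transfers this to $\tau \otimes q$, which is already anisotropic, so $\tau\otimes q$ has maximal splitting.

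There is no real obstacle: once Proposition~\ref{i1maxspl} pins down $i_1(\pi\otimes q)$ exactly, the neighbour relation together with Theorem~\ref{km} forces $i_1(\tau\otimes q)$. The only bookkeeping item is the scalar $a$ relating $\tau$'s enclosing Pfister multiple to $\pi\otimes q$, but this is harmless since $a\pi\otimes q$ and $\pi\otimes q$ define the same projective quadric and hence have the same function field and the same first Witt index.
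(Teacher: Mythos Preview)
Your proof is correct and follows essentially the same route as the paper: establish isotropy equivalence of $\tau\otimes q$ and $\pi\otimes q$ via the codimension bound and Lemma~\ref{star}, then apply Theorem~\ref{km} to transfer the first Witt index, and finally invoke Corollary~\ref{maxsplcor} for the maximal-splitting conclusion. The only cosmetic differences are that you cite Proposition~\ref{i1maxspl} and Lemma~\ref{star} directly where the paper bundles them via Corollary~\ref{i1cor}, and you invoke Theorem~\ref{km}$(i)$ in both directions where the paper uses Theorem~\ref{km}$(ii)$; you are also a bit more careful about the scalar $a$, which the paper leaves implicit.
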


%THIS IS THE STRONG VERSION!

%CONSIDER WHETHER THIS RESULT IS CORRECTLY FORMULATED!!!!!!!!!!!!!!!!!!!!!

\begin{proof} As per Corollary~\ref{i1cor}, since $\dim(\tau\otimes q)> \dim(\pi\otimes q)-(\dim\pi)i_1(q)$, it follows that $\tau\otimes q$ is isotropic over $F(\pi\otimes q)$. Thus, $\tau\otimes q$ is isotropy equivalent to $\pi\otimes q$, whereby Theorem~\ref{km}~$(ii)$ implies that $i_1(\tau\otimes q)=i_1(\pi\otimes q)-(\dim q)(\dim\pi-\dim \tau)$. Invoking Corollary~\ref{maxsplcor}, it follows that $i_1(\tau\otimes q)=(\dim\pi)i_1(q)-(\dim q)(\dim\pi-\dim \tau)$, whereby $\dim(\tau\otimes q)-i_1(\tau\otimes q)=\dim\pi(\dim q-i_1(q))=2^m$ for some $m\in\mathbb{N}$, whereby $\tau\otimes q$ has maximal splitting.\end{proof}

%cannot be relaxed in general

The following example shows that the dimension condition in the preceding result can be sharp. This example furthermore demonstrates that the anisotropic product of two Pfister neighbours, both necessarily having maximal splitting, need not have maximal splitting.

%We note that the above result does not extend to the more general situation where $\pi$ is a Pfister neighbour over $F$. Indeed, the following example shows that 

%I GUESS THAT THAT'S OK

%IT SHOULD BE POSSIBLE TO REPLACE THE FOLLOWING EXAMPLE WITH AN EASIER ARGUMENT
%
%IN PARTICULAR, CONSIDER WHETHER YOU CAN DO THIS WITH RESPECT TO $q\simeq\< 1,-a,-b\>$ anisotropic AND $\tau\simeq\< 1,1,1,1,1\>$: 
%
%If $q\otimes\tau$ is maximal, then it is a Pfister neighbour, whereby its complement must be $\< ab\>$. Hence, it follows that $c(q\perp\< ab\>)=1$, whereby $q\perp\< ab\>\in I^3F$, and thus $q$ is isotropic, a contradiction
%THE ABOVE SEEMS TO BE FLAWED!!!!!!!!!!!!!!!!

%THE FOLLOWING CHECKS OUT: YOU JUST NEED TO WORK THROUGH ALL THE CALCULATIONS. SHOULD THESE BE SPELLED OUT? I CAN DO IF A REFEREE REQUESTS IT.

%DO I WANT TO USE THE CLIFFORD INVARIANT?

\begin{example}\label{2pns} Let $F$ be a field such that $\< 1,1,1,d,d,d\>$ is anisotropic over $F$ for some $d\in F^{\x}$. Let $K=F(\!(x)\!)(\!(y)\!)$ be the iterated Laurent series field in two variables over $F$. Consider the Pfister neighbours $\tau_1\simeq\<1,1,1\>$ and $\tau_2\simeq\< d\>\perp\< 1,-x,-y,xy\>$ over $K$. Applying Springer's Theorem for complete discretely valued fields \cite[Theorem VI.$1.4$]{LAM}, once with respect to the $y$-adic valuation and subsequently twice with respect to the $x$-adic valuation, one sees that the form $\tau_1\otimes\tau_2$ is anisotropic over $K$. Suppose $\tau_1\otimes\tau_2$ has maximal splitting. Since $\dim (\tau_1\otimes\tau_2)=15$, \cite[Corollary 3]{H} implies that $\tau_1\otimes\tau_2$ is a neighbour of some $4$-fold Pfister form $\pi$ over $K$. Comparing determinants, we have that $\tau_1\otimes\tau_2\perp\< d\>\simeq a\pi$ for some $a\in K$. As $a\pi\in I^3K$, it has trivial Clifford invariant (see \cite[Corollary V.$3.4$]{LAM}), whereby the Clifford invariant of $\tau_1\otimes\tau_2\perp\< d\>$ must also be trivial. Hence, applying \cite[V.$(3.13)$]{LAM}, we obtain that $C_0(\tau_1\otimes\tau_2)$, the even Clifford algebra of $\tau_1\otimes\tau_2$, belongs to the trivial class in the Brauer group over $K$. However, applying \cite[V.$(3.13)$]{LAM} to a decomposition of $\tau_1\otimes\tau_2$, we see that $C_0(\tau_1\otimes\tau_2)$ is Brauer equivalent to $\openquat {-1} {-1} K\otimes_K \openquat {x} {y} K$, a product of two quaternion algebras. As above, iterated applications of Springer's Theorem \cite[Theorem VI.$1.4$]{LAM} with respect to the $y$-adic and $x$-adic valuations enable us to conclude that the form $\< 1,1,1,x,y,-xy\>$ is anisotropic over $K$, whereby \cite[Theorem III.$4.8$]{LAM} implies that $\openquat {-1} {-1} K\otimes_K \openquat {x} {y} K$ is a biquaternion division algebra over $K$, and hence non-trivial in the Brauer group of $K$. Thus, we may conclude that $\tau_1\otimes\tau_2$ does not have maximal splitting.
\end{example}

\section{Multiples of generic Pfister forms}

%Fix $K=F(\!(x_1)\!)\ldots (\!(x_n)\!)$ as notation throughout this section?????

%restricting to generic Pfister multiples of $q$,

A number of form-theoretic properties are preserved under multiplication by an arbitrary Pfister form. This is clearly the case with respect to the properties of being a neighbour or a product of a Pfister form. As discussed previously, the properties of being excellent, round or, as established in Corollary~\ref{maxsplcor}, having maximal splitting are also known to be preserved. While many properties of a form are reflected in the properties of its Pfister multiples, we cannot expect the converse to hold in general. Despite this, it seems reasonable to suggest that the behaviour of the generic Pfister multiples of a form might mirror the behaviour of the form itself, with \cite[Lemma 5.4]{Ilow} and \cite[Proposition 6.4.3 and Corollaire 6.4.7]{R} appearing to support this view.

Thus, given a form $q$ over a field $F$, we will consider what can be said regarding its relationship with the form $\pi\otimes q$, where $\pi$ is the generic Pfister form $\<\!\< -x_1,\ldots ,-x_n\>\!\>\simeq \< 1,x_1\>\otimes\ldots\otimes\< 1,x_n\>$ defined over the iterated Laurent series field $F(\!(x_1)\!)\ldots (\!(x_n)\!)$ (we remark that these results also hold if $\pi$ is considered as a form over the rational function field $F(x_1,\ldots ,x_n)$).

\begin{prop}\label{i1trans} Let $q$ be an anisotropic form over $F$ of dimension at least two. For $\pi\simeq \<\!\< -x_1,\ldots ,-x_n\>\!\>$, the anisotropic form $q\otimes \pi$ over $F(\!(x_1)\!)\ldots (\!(x_n)\!)$ satisfies $i_1(q\otimes\pi)=(\dim\pi)i_1(q)$.
\end{prop}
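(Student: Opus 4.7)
The plan is to deduce the equality as a matching pair of inequalities, with the lower bound coming from Theorem~\ref{i1bound} and the upper bound obtained by passing to a larger iterated Laurent series field.

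Set $L=F(\!(x_1)\!)\cdots(\!(x_n)\!)$. First I would verify that $q_L$ and $q\otimes\pi$ are anisotropic over $L$. Writing $q\otimes\<1,x_1\>\otimes\cdots\otimes\<1,x_k\>$ over $F(\!(x_1)\!)\cdots(\!(x_k)\!)$ in the shape $\varphi_k \perp x_k\varphi_k$ with $\varphi_k:=q\otimes\<1,x_1\>\otimes\cdots\otimes\<1,x_{k-1}\>$, iterated applications of Lemma~\ref{Hlemma} reduce this to the anisotropy of $q$ over $F$, which holds by hypothesis. The same iteration shows that $\pi$ itself is anisotropic over $L$, so Theorem~\ref{i1bound} is applicable.

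For the lower bound, I would apply Theorem~\ref{i1bound} to $q_L$ and $\pi$ over $L$ to obtain $i_1(q\otimes\pi)\geqs (\dim\pi)\,i_1(q_L)$. To upgrade $i_1(q_L)$ to $i_1(q)$, I note that $L(q_L)/F$ is an extension over which $q$ becomes isotropic, so the generic splitting property of $q$ over $F$ yields $i_1(q)\leqs i(q_{L(q_L)})=i_1(q_L)$, whence $i_1(q\otimes\pi)\geqs (\dim\pi)\,i_1(q)$.

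For the upper bound, I would extend further to $K=F(q)(\!(x_1)\!)\cdots(\!(x_n)\!)$. Over $F(q)$, $q$ admits the Witt decomposition $q\simeq i_1(q)\x\<1,-1\>\perp q_1$ with $q_1$ anisotropic; tensoring with $\pi$ and extending to $K$ yields $(q\otimes\pi)_K\simeq (\dim\pi)i_1(q)\x\<1,-1\>_K\perp (q_1\otimes\pi)_K$. Re-running the Lemma~\ref{Hlemma} bookkeeping from the first step, but now with $F(q)$ in place of $F$, shows that $(q_1\otimes\pi)_K$ is anisotropic, so $i((q\otimes\pi)_K)=(\dim\pi)i_1(q)$. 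Since $K/L$ makes $q\otimes\pi$ isotropic, the generic splitting property yields $i_1(q\otimes\pi)\leqs (\dim\pi)i_1(q)$, completing the proof. The main obstacle here is not conceptual but notational: anisotropy must be tracked carefully through the iterated Laurent series towers, after which the argument reduces to assembling Theorem~\ref{i1bound} with a well-chosen splitting extension.
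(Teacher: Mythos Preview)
Your proof is correct and follows essentially the same approach as the paper's: both obtain the lower bound from Theorem~\ref{i1bound} and the upper bound by computing $i((q\otimes\pi)_K)$ over $K=F(q)(\!(x_1)\!)\cdots(\!(x_n)\!)$ via Lemma~\ref{Hlemma}. The only cosmetic differences are that the paper treats one variable at a time and iterates, phrasing the upper bound as a contradiction, whereas you handle all $n$ variables at once and argue directly.
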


% \< 1,x_1\>\otimes\ldots\otimes\< 1,x_n\>
%\<\!\< -x_1,\ldots ,-x_n\>\!\>

\begin{proof} Consider the form $q\otimes \< 1,x_1\>$ over $F(\!(x_1)\!)$, which is anisotropic by Lemma~\ref{Hlemma}. Invoking Theorem~\ref{i1bound}, we have that $i_1(q\otimes \< 1,x_1\>)\geqs 2i_1(q)$. Suppose, for the sake of contradiction, that $i_1(q\otimes \< 1,x_1\>) > 2i_1(q)$. Hence, since the form $q\otimes \< 1,x_1\>$ is isotropic over $F(q)(\!(x_1)\!)$, it follows that $i\left({\left(q\otimes \< 1,x_1\>\right)}_{F(q)(\!(x_1)\!)}\right)> 2i_1(q)$. However, Lemma~\ref{Hlemma} implies that $i\left({q\otimes \< 1,x_1\>}_{F(q)(\!(x_1)\!)}\right)=i(q_{F(q)})+i(q_{F(q)})=2i_1(q)$, in contradiction to the above. Thus, we have that $i_1(q\otimes \< 1,x_1\>)= 2i_1(q)$. The result follows by iterating the above argument.
\end{proof}

%IN THE ABOVE PROOF, WE COULD ALSO CITE THE FOLLOWING FACT, AS PER IZHBOLDIN: 
%
%$$F(\!(x_1)\!)(q)\subset F(q)(\!(x_1)\!)$$

%POTENTIAL CONJECTURE (WOULD NEED TO check there is no easy counterexample with respect to $i_h(q)$ formulation):

%I SHOULD ALSO CHECK THAT IT CAN'T BE RESOLVED THROUGH THE USAGE OF H'S METHODS WRT SPL SEQ OF TPFs

%!!!!!!!!!!!!!!!!!!!!!!!!!!!!!!!!!!!!!!!!!!!!!!!!!!!!!!!!!!!!!!!!!!!!!!!!!!!!!!!!!!!!!!!!!!!!!!!!!!!!!!!!!!!!!!!!!!!!!!!!!!!!!!!!!!!!!!!!!!!!!!!!!!!!!!!!!!!!!!!!!!!!!!!!!!!!!!!!!!!!!!!!!!!!!!!!!!!!!!!!!!!!!!!!!!!!!!!!!!!!!!!!!!!!!!!!!!!!!!!!!!!!!!!!!!!!!!!!!!!!!!!!!!!!!!!!!!!!!!!!!!
%
%NB: I NEED TO READ KN 5.19, 5.18 AND 5.7 TO UNDERSTAND CONNECTION TO SPLITTING PATTERN OF $q$
%
%THEY SHOW THAT ALL HIGHER WITT INDICES OF $q\otimes \pi$ EQUAL $\dim\pi$ (EXCEPT FOR HEIGHT GUY ON DIMENSION GROUNDS)
%
%
%!!!!!!!!!!!!!!!!!!!!!!!!!!!!!!!!!!!!!!!!!!!!!!!!!!!!!!!!!!!!!!!!!!!!!!!!!!!!!!!!!!!!!!!!!!!!!!!!!!!!!!!!!!!!!!!!!!!!!!!!!!!!!!!!!!!!!!!!!!!!!!!!!!!!!!!!!!!!!!!!!!!!!!!!!!!!!!!!!!!!!!!!!!!!!!!!!!!!!!!!!!!!!!!!!!!!!!!!!!!!!!!!!!!!!!!!!!!!!!!!!!!!!!!!!!!!!!!!!!!!!!!!!!!!!!!!!!!!!!!!!!
%

%\begin{conj}\label{spltransconj} Let $q$ be an anisotropic form over $F$ of dimension at least two. For $\pi\simeq \<\!\< -x_1,\ldots ,-x_n\>\!\>$, the splitting pattern of $q\otimes \pi$ over $F(\!(x_1)\!)\ldots (\!(x_n)\!)$ is $((\dim\pi)i_1(q),\ldots ,(\dim\pi)i_h(q))$.
%\end{conj}

\begin{question}\label{spltransquestion} Let $q$ be an anisotropic form over $F$ of dimension at least two with splitting pattern $(i_1(q),\ldots ,i_h(q))$. For $\pi\simeq \<\!\< -x_1,\ldots ,-x_n\>\!\>$, is the splitting pattern of $q\otimes \pi$ over $F(\!(x_1)\!)\ldots (\!(x_n)\!)$ given by $((\dim\pi)i_1(q),\ldots ,(\dim\pi)i_h(q))$?
\end{question}

%\begin{conj}\label{i1trans} Let $q$ be an anisotropic form over $F$. For $\pi\simeq \<\!\< -x_1,\ldots ,-x_n\>\!\>$, the form $q\otimes \pi$ over $F(\!(x_1)\!)\ldots (\!(x_n)\!)$ has splitting pattern $\{ (\dim\pi)i_1(q),\ldots ,(\dim\pi)i_h(q)\}$.
%\end{conj}

% with splitting pattern $\{ i_1(q),\ldots ,i_h(q)\}$

\begin{prop}\label{maxtrans} Let $q$ be an anisotropic form over $F$ of dimension at least two. For $\pi\simeq \<\!\< -x_1,\ldots ,-x_n\>\!\>$, the form $q\otimes \pi$ over $F(\!(x_1)\!)\ldots (\!(x_n)\!)$ has maximal splitting if and only if $q$ has maximal splitting.
\end{prop}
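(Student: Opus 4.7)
The plan is to reduce the statement directly to Proposition~\ref{i1trans}, since that result already pins down $i_1(q\otimes\pi)$ exactly in terms of $i_1(q)$, and the maximal-splitting condition is an arithmetic condition on $\dim - i_1$.

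First, I would record that $q$ stays anisotropic after passing to $L := F(\!(x_1)\!)\cdots(\!(x_n)\!)$ by $n$-fold iteration of Springer's Theorem (equivalently, iterated application of Lemma~\ref{Hlemma}), and that the same iteration shows $q\otimes\pi$ is anisotropic over $L$. So the hypotheses of Proposition~\ref{i1trans} are met, and we obtain
\[
i_1(q\otimes\pi) \;=\; (\dim\pi)\, i_1(q) \;=\; 2^n\, i_1(q).
\]

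Next I would compute
\[
\dim(q\otimes\pi) - i_1(q\otimes\pi) \;=\; 2^n\dim q - 2^n i_1(q) \;=\; 2^n\bigl(\dim q - i_1(q)\bigr).
\]
Since $\dim q \geqs 2$ and $q$ is anisotropic we have $\dim q - i_1(q) \geqs 1$, so the right-hand side is a power of two if and only if $\dim q - i_1(q)$ is a power of two. Unpacking the definition of maximal splitting on both sides yields the equivalence.

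There is no real obstacle here: the forward implication is essentially Corollary~\ref{maxsplcor} applied over $L$, while the reverse implication is the observation that dividing the power of two $2^n(\dim q - i_1(q))$ by $2^n$ still yields a power of two. The only point worth being careful about is that we are using Proposition~\ref{i1trans} in both directions (rather than invoking Corollary~\ref{maxsplcor}) so as to have an exact formula for $i_1(q\otimes\pi)$ rather than merely a lower bound, and this is what makes the reverse implication immediate.
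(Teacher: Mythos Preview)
Your proof is correct and follows essentially the same approach as the paper: both arguments rest entirely on Proposition~\ref{i1trans} to obtain the exact value $i_1(q\otimes\pi)=2^n\,i_1(q)$, and then reduce the maximal-splitting condition to an elementary statement about powers of two. The paper uses Corollary~\ref{maxsplcor} for the forward direction and parametrises $\dim q=2^m+k$ for the reverse, whereas you treat both directions symmetrically via the identity $\dim(q\otimes\pi)-i_1(q\otimes\pi)=2^n(\dim q-i_1(q))$; this is a cosmetic difference only.
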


%\< 1,x_1\>\otimes\ldots\otimes\< 1,x_n\>

\begin{proof} Assuming that $q$ has maximal splitting, Corollary~\ref{maxsplcor} implies that $q\otimes \pi$ has maximal splitting.

Conversely, let us assume that $q\otimes \pi$ has maximal splitting. Letting $\dim q=2^m+k$, where $0< k\leqs 2^m$, we have that $\dim (q\otimes \pi )=2^{m+n}+k(2^n)$. As $0< k(2^n)\leqs 2^{m+n}$, we have that $i_1( q\otimes \pi )=k(2^n)$. Invoking Proposition~\ref{i1trans}, it follows that $i_1(q)=k$, whereby $q$ has maximal splitting.
\end{proof}

%Max splitting goes down (if and only if)

%\begin{prop}\label{pntrans1} Let $q$ be an anisotropic form over $F$. For $\pi\simeq \< 1,x_1\>\otimes\ldots\otimes\< 1,x_n\>$, the anisotropic form $q\otimes \pi$ over $F(\!(x_1)\!)\ldots (\!(x_n)\!)$ is a Pfister neighbour if and only if $q$ is a Pfister neighbour.
%\end{prop}
%
%\begin{proof} 

%\end{proof}

%SHOULD I SPLIT THE FOLLOWING RESULT INTO TWO IFF STATEMENTS?
%
%NEED TO BE CAREFUL ABOUT WHETHER $\rho$ IS FIXED IN THE FOLLOWING FORMULATION
%
%\begin{prop}\label{pntrans} Let $q$ be an anisotropic form over $F$ and let $\rho$ be some anisotropic Pfister form over $F$. Let $\pi\simeq \< 1,x_1\>\otimes\ldots\otimes\< 1,x_n\>$ over $F(\!(x_1)\!)\ldots (\!(x_n)\!)$. The following are equivalent:\begin{enumerate}[$(i)$]
%\item $q\otimes \pi$ is a neighbour of a Pfister form over $F(\!(x_1)\!)\ldots (\!(x_n)\!)$,
%\smallskip
%\item $q\otimes \pi$ is a neighbour of the Pfister form $\rho\otimes\pi$ over $F(\!(x_1)\!)\ldots (\!(x_n)\!)$,
%\smallskip
%\item $q$ is a neighbour of the Pfister form $\rho$ over $F$.
%\end{enumerate}
%\end{prop}

%of dimension at least two

\begin{prop}\label{pntrans} Let $q$ be an anisotropic form over $F$ and let $\rho$ be an anisotropic Pfister form over $F$. Let $\pi\simeq \<\!\< -x_1,\ldots ,-x_n\>\!\>$ over $K=F(\!(x_1)\!)\ldots (\!(x_n)\!)$.\begin{enumerate}[$(i)$]
\item $q\otimes \pi$ is a neighbour of a Pfister form over $K$ if and only if $q\otimes \pi$ is a neighbour of $\sigma\otimes\pi$ for $\sigma$ some Pfister form over $F$.
\smallskip
\item $q\otimes \pi$ is a neighbour of the Pfister form $\rho\otimes\pi$ over $K$  if and only if $q$ is a neighbour of the Pfister form $\rho$ over $F$.
\end{enumerate}
\end{prop}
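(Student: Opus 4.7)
The plan is to establish part (ii) first and then deduce part (i) from it via an absorption argument. The reverse direction of (ii) is immediate: given $q\subset a\rho$ with $\dim q>\frac{1}{2}\dim\rho$ and $a\in F^{\times}$, tensoring with $\pi$ yields $q\otimes\pi\subset a(\rho\otimes\pi)$, and the dimension inequality is preserved.

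For the forward direction of (ii), I would compare iterated Springer decompositions over $K$. Applying Lemma~\ref{Hlemma} inductively, $\rho\otimes\pi$ remains anisotropic over $K$, and every anisotropic form $\Phi$ over $K$ admits a unique decomposition $\Phi\simeq\perp_{S\subseteq\{1,\ldots,n\}}(\prod_{i\in S}x_i)\Phi_S$ for anisotropic forms $\Phi_S$ over $F$. Since $\pi\simeq\perp_S\<\prod_{i\in S}x_i\>$, any Pfister multiple $\varphi\otimes\pi$ of an $F$-form $\varphi$ has $\Phi_S=\varphi$ for every $S$. Given $q\otimes\pi\subset b(\rho\otimes\pi)$ over $K$, I would first reduce to $b\in F^{\times}$: writing the square class of $b$ as $a\prod_{i\in T}x_i$ with $a\in F^{\times}$ and $T\subseteq\{1,\ldots,n\}$, the roundness of $\pi$ (which contains every $\prod_{i\in T}x_i$ as a diagonal entry, so $\prod_{i\in T}x_i\in D_K(\pi)=G_K(\pi)$) gives $b(\rho\otimes\pi)\simeq a(\rho\otimes\pi)$. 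Writing $a(\rho\otimes\pi)\simeq(q\otimes\pi)\perp\gamma$ for the anisotropic complementary form $\gamma$ and decomposing $\gamma\simeq\perp_S(\prod_{i\in S}x_i)\gamma_S$, the two sides' Springer decompositions $\perp_S(\prod_{i\in S}x_i)(a\rho)$ and $\perp_S(\prod_{i\in S}x_i)(q\perp\gamma_S)$ must coincide by uniqueness, forcing $a\rho\simeq q\perp\gamma_S$ over $F$ for every $S$. In particular $q\subset a\rho$, making $q$ a neighbour of $\rho$.

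Part (i) reverses trivially. For the forward direction, suppose $q\otimes\pi\subset c\Pi$ for some Pfister form $\Pi$ over $K$. After rescaling $q$ so that $1\in D_F(q)$ and using roundness of $\Pi$ to replace $c$ by $1$, one arranges $q\otimes\pi\subset\Pi$, whence $\pi\subset q\otimes\pi\subset\Pi$. By the standard factor theorem for Pfister forms (a Pfister form containing a Pfister subform is a tensor product of Pfister forms, see, e.g., \cite[Chapter X]{LAM}), $\Pi\simeq\pi\otimes\sigma$ for some Pfister form $\sigma$ over $K$. Iterating the identity $\<1,x_j\>\otimes\<1,x_ju\>\simeq\<1,x_j\>\otimes\<1,u\>$ (which follows from the roundness of $\<1,x_j\>$) absorbs every $x_i$-factor appearing in the slots of $\sigma$, producing a Pfister form $\sigma'$ over $F$ with $\Pi\simeq\pi\otimes\sigma'\simeq\sigma'\otimes\pi$. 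The main obstacles are justifying the uniqueness of the iterated Springer decomposition (by induction from Springer's theorem for complete discretely valued fields) and cleanly executing the variable absorption step in part (i).
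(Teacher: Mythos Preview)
Your proposal is correct. Part (ii) is essentially the paper's argument, just executed for all $n$ variables simultaneously via the iterated Springer decomposition rather than one variable at a time; the reduction of the scalar to $F^{\times}$ via roundness of $\pi$ and the componentwise comparison $a\rho\simeq q\perp\gamma_S$ are identical in spirit.

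For part (i) your route diverges somewhat from the paper's. The paper works one variable at a time: it observes that $q\otimes\langle 1,x_1\rangle$ becomes hyperbolic over the function field of $\langle 1,x_1\rangle$, hence so does the ambient Pfister form $\gamma$, and then invokes \cite[Chapter X, Theorem 4.11 and Corollary 4.13]{LAM} to factor $\gamma\simeq\langle 1,x_1\rangle\otimes\vartheta$; a slot manipulation (the identity $\langle\!\langle -ax_1,-bx_1\rangle\!\rangle\simeq\langle\!\langle -ab,-ax_1\rangle\!\rangle$) then forces the cofactor down to $F$, and one iterates, implicitly using the $n=1$ case of (ii) at each inductive step. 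You instead normalise so that $1\in D_F(q)$, whence $\pi\subset q\otimes\pi\subset\Pi$, and appeal directly to the Pfister subform factorisation theorem to obtain $\Pi\simeq\pi\otimes\sigma$ in one stroke; the absorption of the $x_i$'s from the slots of $\sigma$ via $\pi\otimes\langle 1,x_ju\rangle\simeq\pi\otimes\langle 1,u\rangle$ is then the same device the paper uses. Your approach is arguably cleaner in that it handles all variables at once and does not need to interleave (i) and (ii) during the iteration; the paper's approach has the minor advantage of citing a very concrete result in Lam rather than the general subform factorisation. Two small remarks: your opening claim that (i) is ``deduced from'' (ii) is not actually borne out by your argument, which is self-contained; and the rescaling of $q$ is harmless since the statement of (i) is invariant under replacing $q$ by a similar form, but this should be said explicitly.
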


\begin{proof} $(i)$ The ``if'' statement is clear. To prove the converse, we begin by considering the form $q\otimes \< 1,x_1\>$, which is anisotropic over $F(\!(x_1)\!)$ by Lemma~\ref{Hlemma}. Suppose that $q\otimes \< 1,x_1\>$ is a neighbour of an anisotropic Pfister form $\gamma$ over $F(\!(x_1)\!)$. As the form $q\otimes \< 1,x_1\>$ is isotropic (indeed, hyperbolic) over $F(\!(x_1)\!)(\< 1,x_1\>)$, it follows that the Pfister form $\gamma$ is hyperbolic over $F(\!(x_1)\!)(\< 1,x_1\>)$, whereby we have that $\gamma\simeq \< 1,x_1\>\otimes \vartheta$ for some Pfister form $\vartheta$ over $F(\!(x_1)\!)$ (see \cite[Ch.X, Theorem $4.11$ and Corollary $4.13$]{LAM}). As was observed in the proof of \cite[Proposition 7]{H}, since every non-zero square class in $F(\!(x_1)\!)$ can be represented by $a$ or $ax_1$ for some $a\in F^{\x}$, and since $\<\!\< -ax_1,-bx_1\>\!\>\simeq \<\!\< -ab,-ax_1\>\!\>$ for all $a,b\in F^{\x}$, we may assume that either $\vartheta$ is defined over $F$ or that $\vartheta\simeq\delta\otimes \< 1,ax_1\>$ for $a\in F^{\x}$ and $\delta$ a Pfister form over $F$. In the latter case, we have that $$\gamma\simeq \< 1,x_1\>\otimes \vartheta\simeq  \< 1,x_1\>\otimes \delta\otimes \< 1,ax_1\>\simeq \< 1,x_1\>\otimes \delta\otimes \< 1,a\>.$$ Hence, in either case, we have that $\gamma\simeq \< 1,x_1\>\otimes \sigma$ for $\sigma$ a Pfister form over $F$. Statement $(i)$ now follows by iterating the above argument.

$(ii)$ The ``if'' statement is clear. To prove the converse, we begin by assuming that $q\otimes \< 1,x_1\>$ is a neighbour of $\rho\otimes \< 1,x_1\>$ over $F(\!(x_1)\!)$ for some Pfister form $\rho$ over $F$. Letting $\gamma$ denote the complementary form of $q\otimes \< 1,x_1\>$, we have that $q\otimes \< 1,x_1\>\perp\gamma\sim \rho\otimes \< 1,x_1\>$. As every square class in $F(\!(x_1)\!)$ is represented by $a$ or $ax_1$ for some $a\in F^{\times}$, and since $x_1\in D(\< 1,x_1\>)=G(\< 1,x_1\>)$, we have that $q\otimes \< 1,x_1\>\perp\gamma\simeq a(\rho\otimes \< 1,x_1\>)$ for $a\in F^{\times}$. As $\gamma\simeq \gamma_1\perp x_1\gamma_2$ for some forms $\gamma_1$, $\gamma_2$ over $F$, we have that $(q\perp x_1 q)\perp (\gamma_1\perp x_1\gamma_2)\simeq a\rho\perp x_1(a\rho).$ Invoking Lemma~\ref{Hlemma}, it thus follows that $$q\perp\gamma_1\simeq a\rho\simeq q\perp\gamma_2.$$ Thus, $q$ is a Pfister neighbour of $\tau$ with complementary form $\gamma_1\simeq\gamma_2$. Statement $(ii)$ now follows by iterating the above argument.
\end{proof}

% if $q\otimes \< 1,x_1\>$ is a neighbour of $\rho\otimes \< 1,x_1\>$ with complementary form $\gamma_1\otimes\< 1,x_1\>$, we have that 

\begin{remark}\label{Hcontext} In \cite[Proposition 7]{H} it was established that a form $q$ over $F$ is a Pfister neighbour if and only if it is a Pfister neighbour over $F(x)$. Thus, Proposition~\ref{pntrans} may be viewed as a generalisation of this result in the case where $q$ is an anisotropic form. Moreover, by adapting the isotropic part of the proof of \cite[Proposition 7]{H} and invoking Proposition~\ref{pntrans} in the anisotropic case, one can establish that, for all forms $q$ over $F$, we have that $q\otimes \<\!\< -x_1,\ldots ,-x_n\>\!\>$ is a Pfister neighbour over $F(\!(x_1)\!)\ldots (\!(x_n)\!)$ if and only if $q$ is a Pfister neighbour over $F$.
\end{remark}

As was remarked in \cite{Kn2}, if $q$ is an excellent form and $\pi$ is a Pfister form, then $q\otimes \pi$ is an excellent form.

\begin{prop}\label{extrans} Let $q$ be an anisotropic form over $F$. For $\pi\simeq \<\!\< -x_1,\ldots ,-x_n\>\!\>$, the form $q\otimes \pi$ over $F(\!(x_1)\!)\ldots (\!(x_n)\!)$ is excellent if and only if $q$ is excellent.
\end{prop}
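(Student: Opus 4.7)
The plan is: the ``if'' direction follows immediately from the remark of Knebusch cited just before the statement, once one observes (via iterated applications of Lemma~\ref{Hlemma}) that $q_K$ remains anisotropic over $K=F(\!(x_1)\!)\ldots(\!(x_n)\!)$ and is therefore still excellent there, whence tensoring with the Pfister form $\pi$ preserves excellence. The substance lies in the converse, which I would establish by induction on $\dim q$, the base case $\dim q\leqs 1$ being trivial since such forms are excellent by definition.

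For the inductive step, suppose $\dim q\geqs 2$ and assume the statement for all anisotropic forms of strictly smaller dimension. Since $q\otimes\pi$ is excellent of dimension at least two, it is a Pfister neighbour over $K$. Proposition~\ref{pntrans}$(i)$ allows me to realise the ambient Pfister form as $\sigma\otimes\pi$ for some Pfister form $\sigma$ over $F$, and Proposition~\ref{pntrans}$(ii)$ then yields that $q$ is a neighbour of $\sigma$ over $F$. Writing $q\perp\tau\simeq a\sigma$ for a complementary form $\tau$ and some $a\in F^{\x}$, I would tensor this isometry with $\pi$ to obtain $(q\otimes\pi)\perp(\tau\otimes\pi)\simeq a(\sigma\otimes\pi)$, so $\tau\otimes\pi$ serves as a complementary form of the Pfister neighbour $q\otimes\pi$. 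The excellence of $q\otimes\pi$ therefore transfers to $\tau\otimes\pi$. Since $q$ is an anisotropic neighbour, $\sigma$ and hence $\tau$ are anisotropic, and $\dim\tau<\dim q$; applying the induction hypothesis yields that $\tau$ is excellent, whereby $q$ is excellent.

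The main technical point, and the step I would verify carefully, is the identification of $\tau\otimes\pi$ as \emph{the} complementary form of $q\otimes\pi$ in the sense required by the definition of excellence. The complementary form of an anisotropic Pfister neighbour is only determined up to similarity, but excellence is manifestly a similarity invariant, so the transfer of excellence from $q\otimes\pi$ to $\tau\otimes\pi$ is unambiguous. Once Proposition~\ref{pntrans} is invoked to pull the Pfister-neighbour structure of $q\otimes\pi$ back to one on $q$, the inductive machinery runs essentially formally.
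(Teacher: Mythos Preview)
Your proof is correct and follows essentially the same line as the paper's. The only organisational differences are that the paper first treats the case $n=1$ and then iterates on the number of Laurent variables, whereas you work directly with the general $\pi$ by invoking the full statement of Proposition~\ref{pntrans}; and where the paper appeals to the \emph{proof} of Proposition~\ref{pntrans}$(ii)$ to identify the complementary form of $q\otimes\pi$ as $\tau\otimes\pi$, you obtain this more cleanly by first descending the Pfister-neighbour structure to $q$ and then tensoring the relation $q\perp\tau\simeq a\sigma$ by $\pi$. Your explicit ``iteration until $\dim q_n\leqslant 1$'' in the paper is exactly your induction on $\dim q$.
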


%\< 1,x_1\>\otimes\ldots\otimes\< 1,x_n\>
%\<\!\< -x_1,\ldots ,-x_n\>\!\>

\begin{proof} As above, it suffices to prove the ``only if'' statement. We will begin by assuming that $q\otimes \< 1,x_1\>$ is excellent over $F(\!(x_1)\!)$. Hence, $q\otimes \< 1,x_1\>$ is a Pfister neighbour over $F(\!(x_1)\!)$. Invoking Proposition~\ref{pntrans} $(i)$, we have that $q\otimes \< 1,x_1\>$ is a Pfister neighbour of $\rho\otimes\< 1,x_1\>$ for some Pfister form $\rho$ over $F$. As per the proof of Proposition~\ref{pntrans} $(ii)$, $q\otimes \< 1,x_1\>$ has complementary form $q_1\otimes\< 1,x_1\>$ for some form $q_1$ over $F$. Moreover, again as per the proof of Proposition~\ref{pntrans} $(ii)$, we have that $q$ is a neighbour of $\rho$ with complementary form $q_1$. As $q\otimes \< 1,x_1\>$ is excellent, we have that $q_1\otimes\< 1,x_1\>$ is excellent. Arguing as above, we may establish that $q_1\otimes\< 1,x_1\>$ has complementary form $q_2\otimes\< 1,x_1\>$ for some form $q_2$ over $F$, and that $q_1$ is a Pfister neighbour with complementary form $q_2$. Iterating this argument until $\dim q_n\leqs 1$, we thus obtain that $q$ is an excellent form over $F$. 

The general statement now follows by iterating the above argument.
\end{proof}

%Hence, for $q\otimes \< 1,x_1\>$ excellent over $F(\!(x_1)\!)$, it follows that $q$

%As per the proof of Proposition~\ref{pntrans} $(ii)$, the form $q\otimes \< 1,x_1\>$ is a neighbour of $\rho\otimes \< 1,x_1\>$ with complementary form $\gamma_1\otimes\< 1,x_1\>$, where $\gamma_1$ a form over $F$.

%QUESTION: WHAT IF $q$ IS ZERO DIMENSIONAL?????
%
%COMPLEMENT WON'T BE ZERO DIMENSIONAL!!

%NOTE: FRAMING AS A MULTIPLE REMOVES THE NEED TO INCLUDE SIMILARITY FACTORS!!!!!!!!!!!!!!!!!!!!!!!!!!!!!!!!!!!!!!!!!!!!!!!

\begin{prop}\label{divtrans} Let $q$ be an anisotropic form over $F$ and let $\rho$ be an anisotropic $m$-fold Pfister form over $F$. Let $\pi\simeq \<\!\< -x_1,\ldots ,-x_n\>\!\>$ over $K=F(\!(x_1)\!)\ldots (\!(x_n)\!)$. 
\begin{enumerate}[$(i)$]
\item $q\otimes \pi$ is a multiple of $\vartheta\in P_{m+n}K$ if and only if $q\otimes \pi$ is a multiple of $\sigma\otimes \pi$ for some $\sigma\in P_mF$.
\smallskip
\item $q\otimes \pi$ is a multiple of $\rho\otimes \pi$ over $K$ if and only if $q$ is a multiple of $\rho$ over $F$.
\end{enumerate}
\end{prop}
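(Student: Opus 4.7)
In both parts the ``if'' direction is immediate: $\sigma\otimes\pi$ is an $(m+n)$-fold Pfister form over $K$, and if $q\simeq\rho\otimes\alpha$ then $q\otimes\pi\simeq(\rho\otimes\pi)\otimes\alpha$. The plan is to handle the one-variable case $\pi\simeq\<1,x_1\>$ first and then iterate across $x_2,\ldots ,x_n$ with base field $F(\!(x_1)\!)\ldots (\!(x_{n-1})\!)$, following the strategy of Proposition~\ref{pntrans} and using Lemma~\ref{Hlemma} at each step to secure anisotropy of the forms in play.

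For (ii), assume $q\otimes\<1,x_1\>\simeq \rho\otimes\<1,x_1\>\otimes\gamma$ over $F(\!(x_1)\!)$ and write $\gamma\simeq\gamma_1\perp x_1\gamma_2$ with $\gamma_i$ defined over $F$. Expanding the right-hand side, and using that $x_1^2$ is a square, yields
\[ \rho\otimes\<1,x_1\>\otimes\gamma\;\simeq\;\rho\otimes(\gamma_1\perp\gamma_2)\;\perp\;x_1\,\rho\otimes(\gamma_1\perp\gamma_2), \]
while the left-hand side is $q\perp x_1 q$. Since $q\otimes\<1,x_1\>$ is anisotropic by Lemma~\ref{Hlemma}, Springer's Theorem \cite[Theorem VI.$1.4$]{LAM} forces the residues on both sides to be anisotropic and identifies them, giving $q\simeq\rho\otimes(\gamma_1\perp\gamma_2)$ over $F$, so $q$ is a multiple of $\rho$.

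For (i), the preliminary input is the structural fact recalled in the proof of Proposition~\ref{pntrans}~$(i)$: every Pfister form $\vartheta$ over $F(\!(x_1)\!)$ is isometric either to a Pfister form $\vartheta_0$ defined over $F$ or to $\sigma'\otimes\<1,ax_1\>$ for some Pfister form $\sigma'$ over $F$ and $a\in F^{\x}$. Writing $q\otimes\<1,x_1\>\simeq\vartheta\otimes\beta$ with $\beta\simeq\beta_1\perp x_1\beta_2$ and running the same Springer residue computation as in (ii) delivers, in the first case, $q\simeq\vartheta_0\otimes\beta_1$, so that $q$ is a multiple of any $m$-fold Pfister factor of $\vartheta_0$; in the second case it delivers $q\simeq\sigma'\otimes(\beta_1\perp a\beta_2)$, so that $q$ is a multiple of the $m$-fold Pfister form $\sigma'$. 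Tensoring the resulting identity with $\<1,x_1\>$ then exhibits $q\otimes\<1,x_1\>$ as a multiple of $\sigma\otimes\<1,x_1\>$ for some $\sigma\in P_mF$.

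Iteration of (ii) is a direct induction on $n$. For (i), one iteration step uses the one-variable case over $F_{n-1}=F(\!(x_1)\!)\ldots (\!(x_{n-1})\!)$ to obtain some $\sigma_1\in P_{m+n-1}F_{n-1}$ with $q\otimes\pi$ a multiple of $\sigma_1\otimes\<1,x_n\>$; then the one-variable case of (ii) over $F_{n-1}$ shows that $q\otimes\pi'$ is a multiple of $\sigma_1$ over $F_{n-1}$, after which the inductive hypothesis of (i) in $n-1$ variables completes the descent. The main obstacle is the case split in (i), which forces the use of the Pfister structural classification over $F(\!(x_1)\!)$; once this classification has been invoked, both statements reduce cleanly to the uniqueness of Springer decompositions.
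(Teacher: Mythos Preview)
Your proof is correct and follows essentially the same route as the paper: Springer residue analysis over $F(\!(x_1)\!)$, the structural classification of Pfister forms over a Laurent series field, the same two-case split in $(i)$, and then iteration. The only cosmetic differences are that you phrase the residue comparison as uniqueness of the Springer decomposition where the paper subtracts and applies Lemma~\ref{Hlemma}, and you spell out the iteration for $(i)$ (via the one-variable case of $(ii)$) more explicitly than the paper's ``iterate the above argument''; your passage to an $m$-fold factor of $\vartheta_0$ in the first case of $(i)$ is in fact a small point the paper leaves implicit.
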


\begin{proof} $(i)$ The ``if'' statement is clear. To prove the converse, we begin by considering the case where $q\otimes \< 1,x_1\>$ is a multiple of $\vartheta\in P_{m+1}F(\!(x_1)\!)$. As per the proof of Proposition~\ref{pntrans} $(i)$, we may assume that either $\vartheta\simeq  \<\!\< -a_1,\ldots ,-a_{m+1}\>\!\>$ or $\vartheta\simeq  \<\!\< -a_1,\ldots ,-a_m,-a_{m+1}x_1\>\!\>$ for some $a_1,\ldots ,a_{m+1}\in F^{\x}$.

%$\vartheta\otimes\varphi_1\perp x_1(\vartheta\otimes\varphi_2)$, whereby $\vartheta\otimes\varphi_1\simeq q\simeq \vartheta\otimes\varphi_2$. Thus, we have that $q\otimes \< 1,x_1\>\simeq (\vartheta\otimes\varphi_1)\otimes \< 1,x_1\>\simeq (\vartheta\otimes \< 1,x_1\>)\otimes\varphi_1$ as desired.

In the case where $\vartheta\simeq  \<\!\< -a_1,\ldots ,-a_{m+1}\>\!\>$, let $\varphi$ be a form over $F(\!(x_1)\!)$ such that $q\otimes \< 1,x_1\>\simeq \vartheta\otimes\varphi$. As $\varphi\simeq \varphi_1\perp x_1\varphi_2$ for some forms $\varphi_1$ and $\varphi_2$ over $F$, we have that $q\otimes \< 1,x_1\>\simeq \vartheta\otimes\varphi_1\perp x_1(\vartheta\otimes\varphi_2)$, whereby $\vartheta\otimes\varphi_1\simeq q\simeq \vartheta\otimes\varphi_2$. Hence $$q\otimes \< 1,x_1\>\simeq (\vartheta\otimes\varphi_1)\otimes \< 1,x_1\>\simeq (\vartheta\otimes \< 1,x_1\>)\otimes\varphi_1,$$ whereby $q\otimes \< 1,x_1\>$ is a multiple of $\vartheta\otimes \< 1,x_1\>$ for $\vartheta\in P_{m+1}F$ in this case.

In the case where $\vartheta\simeq \<\!\< -a_1,\ldots ,-a_m,-a_{m+1}x_1\>\!\>$, we will denote $\<\!\< -a_1,\ldots ,-a_m\>\!\>$ by $\sigma$, whereby $\vartheta\simeq \sigma\otimes \< 1,a_{m+1}x_1\>$. Let $\varphi$ be a form over $F(\!(x_1)\!)$ such that $q\otimes \< 1,x_1\>\simeq \vartheta\otimes\varphi$. As $\varphi\simeq \varphi_1\perp x_1\varphi_2$ for some forms $\varphi_1$ and $\varphi_2$ over $F$, we have that 
\begin{align*} 
q\otimes \< 1,x_1\>&\simeq \vartheta\otimes\varphi\simeq (\sigma\otimes\< 1,a_{m+1}x_1\>)\otimes (\varphi_1\perp x_1\varphi_2),\\ 
&\simeq \sigma\otimes\varphi_1\perp a_{m+1}(\sigma\otimes\varphi_2)\perp x_1(a_{m+1}(\sigma\otimes\varphi_1)\perp \sigma\otimes\varphi_2),\\
&\simeq \sigma\otimes (\varphi_1\perp a_{m+1}\varphi_2)\perp x_1(\sigma\otimes (\varphi_2\perp a_{m+1}\varphi_1)).
\end{align*}
Hence, by taking the difference of isometric forms and invoking Lemma~\ref{Hlemma}, it follows that $$\sigma\otimes (\varphi_1\perp a_{m+1}\varphi_2)\simeq q\simeq \sigma\otimes (\varphi_2\perp a_{m+1}\varphi_1).$$ Thus, we have that $$q\otimes \< 1,x_1\>\simeq (\sigma\otimes (\varphi_1\perp a_{m+1}\varphi_2))\otimes \< 1,x_1\>\simeq (\varphi_1\perp a_{m+1}\varphi_2)\otimes (\sigma\otimes \< 1,x_1\>),$$ whereby $q\otimes \< 1,x_1\>$ is a multiple of $\sigma\otimes \< 1,x_1\>$ for $\sigma\in P_{m}F$ in this case.

% we have that $\<\!\< -a_1,\ldots ,-a_n\>\!\>\otimes (\varphi_1\perp a_{n+1}\varphi_2)\simeq q\simeq \<\!\< -a_1,\ldots ,-a_n\>\!\>\otimes (\varphi_2\perp a_{n+1}\varphi_1)$. 
 Statement $(i)$ follows by iterating the above argument.

$(ii)$ The ``if'' statement is clear. To prove the converse, we begin by considering the case where $q\otimes \< 1,x_1\>$ is a multiple of $\rho\otimes \< 1,x_1\>$ over $F(\!(x_1)\!)$. Let $\varphi$ be a form over $F(\!(x_1)\!)$ such that $q\otimes \< 1,x_1\>\simeq \varphi \otimes(\rho\otimes \< 1,x_1\>)$. As $\varphi\simeq \varphi_1\perp x_1\varphi_2$ for some forms $\varphi_1$ and $\varphi_2$ over $F$, we have that $$q\otimes \< 1,x_1\>\simeq (\varphi_1\otimes \rho\perp \varphi_2\otimes\rho)\perp x_1(\varphi_1\otimes \rho\perp \varphi_2\otimes\rho).$$ Hence, the difference of these forms is hyperbolic, whereby we may invoke Lemma~\ref{Hlemma} to establish that $q\simeq\varphi_1\otimes \rho\perp \varphi_2\otimes\rho\simeq (\varphi_1\perp\varphi_2)\otimes\rho$, as desired. Statement $(ii)$ now follows by iterating the above argument.
\end{proof}

Witt's Round Form Theorem \cite[Theorem X.$1.14$]{LAM} states that the product of a Pfister form and a round form is round.

\begin{prop}\label{roundtrans} Let $q$ be an anisotropic form over $F$. For $\pi\simeq \<\!\< -x_1,\ldots ,-x_n\>\!\>$, the form $q\otimes \pi$ over $F(\!(x_1)\!)\ldots (\!(x_n)\!)$ is round if and only if $q$ is round.
\end{prop}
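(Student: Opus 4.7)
The ``if'' direction follows directly from Witt's Round Form Theorem \cite[Theorem X.$1.14$]{LAM}, since $q\otimes\pi$ is the product of $q$ with the Pfister form $\pi$. As in the preceding propositions, to prove the converse it suffices to establish the case $n=1$, with the general statement following by iteration.

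Thus, suppose $q\otimes\<1,x_1\>$ is round over $F(\!(x_1)\!)$. First I would check that $q$ represents $1$ over $F$: by roundness we have $1\in D_{F(\!(x_1)\!)}(q\otimes\<1,x_1\>)$, and since $q\otimes\<1,x_1\>\simeq q\perp x_1 q$, Springer's Theorem \cite[Theorem VI.$1.4$]{LAM} (cf. Lemma~\ref{Hlemma}) yields $1\in D_F(q)$, hence $G_F(q)\subseteq D_F(q)$.

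For the reverse inclusion, let $a\in D_F(q)$. Then $a\in D_{F(\!(x_1)\!)}(q\otimes\<1,x_1\>)$, and roundness gives $a(q\otimes\<1,x_1\>)\simeq q\otimes\<1,x_1\>$, that is, $aq\perp x_1(aq)\simeq q\perp x_1 q$ over $F(\!(x_1)\!)$. Considering the difference of these isometric forms, the form $(aq\perp-q)\perp x_1(aq\perp-q)$ is hyperbolic of dimension $4\dim q$, so Lemma~\ref{Hlemma} forces $i(aq\perp-q)=\dim q$, whereby $aq\simeq q$ over $F$. Hence $a\in G_F(q)$, establishing $D_F(q)\subseteq G_F(q)$ and thus the roundness of $q$ over $F$.

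The main work is really just the extraction step from $F(\!(x_1)\!)$ to $F$, but this is handled uniformly by the Springer-type argument already invoked repeatedly in this section (cf. the proofs of Propositions~\ref{pntrans}, \ref{extrans} and \ref{divtrans}), so no new obstacle arises. The general statement follows by applying the $n=1$ case iteratively, writing $F(\!(x_1)\!)\ldots(\!(x_n)\!)$ as $L(\!(x_n)\!)$ for $L=F(\!(x_1)\!)\ldots(\!(x_{n-1})\!)$ and noting that $q\otimes\<\!\<-x_1,\ldots,-x_n\>\!\>\simeq(q\otimes\<\!\<-x_1,\ldots,-x_{n-1}\>\!\>)\otimes\<1,x_n\>$.
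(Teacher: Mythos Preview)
Your proof is correct and follows essentially the same approach as the paper's own argument: reduce to $n=1$, use roundness to get $1\in D_{F(\!(x_1)\!)}(q\perp x_1q)$ and extract $1\in D_F(q)$ via Springer/Lemma~\ref{Hlemma}, then for $a\in D_F(q)$ use roundness of $q\perp x_1q$ to obtain $aq\perp x_1(aq)\simeq q\perp x_1q$ and descend via Lemma~\ref{Hlemma} to $aq\simeq q$. The only cosmetic difference is that the paper phrases the first step as ``$\langle -1\rangle\perp q\perp x_1q$ is isotropic'' before invoking Lemma~\ref{Hlemma}, whereas you cite Springer's Theorem directly.
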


%\< 1,x_1\>\otimes\ldots\otimes\< 1,x_n\>
%\<\!\< -x_1,\ldots ,-x_n\>\!\>

%, whereby $D_{F(\!(x_1)\!)}(q\perp x_1q)=G_{F(\!(x_1)\!)}(q\perp x_1q)$
\begin{proof} As above, it suffices to prove the ``only if'' statement. We will begin by assuming that $q\otimes \< 1,x_1\>$ is round over $F(\!(x_1)\!)$. 

Hence, $1\in D_{F(\!(x_1)\!)}(q\otimes \< 1,x_1\>)=G_{F(\!(x_1)\!)}(q\otimes \< 1,x_1\>)$, whereby $\< -1\>\perp q\perp x_1q$ is isotropic over $F(\!(x_1)\!)$. Invoking Lemma~\ref{Hlemma}, we obtain that $\< -1\>\perp q$ is isotropic over $F$, whereby $1\in D_F(q)$ and thus $G_F(q)\subset D_F(q)$.

%Springer's Theorem for complete discretely valued fields \cite[Theorem VI.$1.4$]{LAM}

Let $y\in D_F(q)$. As $y\in D_{F(\!(x_1)\!)}(q\perp x_1q)=G_{F(\!(x_1)\!)}(q\perp x_1q)$, it follows that $q\perp x_1q\simeq yq\perp x_1yq$ over $F(\!(x_1)\!)$. Thus, $q\perp -yq\perp x_1(q\perp -yq)$ is hyperbolic over $F(\!(x_1)\!)$. Invoking Lemma~\ref{Hlemma}, it follows that $q\perp -yq$ is hyperbolic over $F$. Hence, we have that $y\in G_F(q)$, whereby $G_F(q)=D_F(q)$.

The general statement now follows by iterating the above argument.
\end{proof}

%Add Karpenko here? I.E. RECOVERY OF $i_1$ EQUALITY FOR $q$ GENERIC (MIGHT GO INTO PREVIOUS SECTION) AND APPLICATION TO THE RECOVERY OF ALL POSSIBLE $i_1(q)$ VALUES

%
%IT WOULD BE GOOD TO BE ABLE TO EXTEND IV'S RESULTS RE MAX SPLITTING - PN CORRESPONDENCE VIA ROUSSEY RESULTS, EG. IF $q$ IS A $22$-DIMENSIONAL FORM WITH MAX SPLITTING THAT IS DIVISIBLE BY A BINARY FORM, THEN $q$ IS A PFISTER NEIGHBOUR. THIS SEEMS DOABLE, PROVIDED THAT I CAN ESTABLISH ROUSSEY RESULTS

%SHOW THAT EXCELLENCE GOES DOWN TOO
%
%
%QUESTION: DOES ROUNDNESS GO DOWN??????

%\section{Applications}

%\section{Pfister products and preservation of properties}

\section{Properties preserved by Pfister products}

%CONTEXT: PROPERTY $P$ GOING DOWN ALLOWS US TO GO UP WITH NON-$P$...USEFUL (?) FOR DISCRIMINATING BETWEEN WHEN FORMS HAVE $P_1$ BUT NOT $P_2$. THE FOLLOWING (ABOVE) IS THE MOST FAMOUS QUESTION IN THIS REGARD
%
%\textbf{MAX SPLITTING AND PN}

%TO BE CHANGED:

%A number of form-theoretic properties are preserved under multiplication by an arbitrary Pfister form. This is clearly the case with respect to the properties of being a neighbour or a product of a Pfister form. As discussed previously, the properties of being excellent, round or, as established in Corollary~\ref{maxsplcor}, having maximal splitting are also known to be preserved. 

As before, many properties of a form are preserved under multiplication by a Pfister form. Moreover, as per the results of the previous section, the absence of these properties is reflected in the generic Pfister multiples of the form. Thus, combining these observations, one can look to clarify how such properties relate to one another.

%the nature of the relationships that exist between such properties.

%

%the relationships that exist between such properties.

%preserved under multiplication by generic Pfister forms. %The methodology employed in the previous section lends itself to clarifying the relationships that exist between such properties.

%MORE ON ROLE OF generic HERE

%and excellence properties are also preserved
%
% such as,  and 
%
%being a neighbour or multiple of a Pfister form
%
%being excellent, being round
%
% having maximal splitting

%being a multiple of a Pfister form
%isotropy and hyperbolicity over function fields (harder to state)

%the properties of being a Pfister neighbour,
%
%are preserved under multiplication by a Pfister form

%REPLACING Brauer equivalent to a quaternion division algebra WITH is of index two

An important question in this regard, first posed in \cite{H}, is to determine when the maximal splitting property implies that the Pfister neighbour property also holds. In particular, a condition that refers solely to the dimension of the form is sought. Characterisations of the Pfister neighbour property are known for forms of small dimension (see \cite{Kn2} for example). In particular, an anisotropic $5$-dimensional form is a Pfister neighbour if and only if its even Clifford algebra is of Schur index two, as follows from \cite[Corollary 8.2]{Kn2}. Thus, over the field $F=\mathbb{R}(w,x,y,z)$, the form $q\simeq \< 1,w,x,y,z\>$ is not a Pfister neighbour: by applying \cite[V.$(3.13)$]{LAM}, one sees that its even Clifford algebra is Brauer equivalent to $(-w,-x)_F\otimes (-yz,wxz)_F$, which is a biquaternion division algebra by Albert's Theorem \cite[Theorem III.$4.8$]{LAM}. Moreover, $q$ trivially has maximal splitting. For all $n> 2$, Hoffmann considered the product $\pi_{n-2}\otimes q$, where $\pi_{n-2}$ is the $(n-2)$-fold Pfister form $\<\!\< -1,\ldots ,-1\>\!\>$ over $F$. In \cite[Example 2]{H}, he established that $\pi_{n-2}\otimes q$ is a $(2^n+2^{n-2})$-dimensional form with maximal splitting that is not a Pfister neighbour. The following proposition allows us to recover the existence of such $(2^n+2^{n-2})$-dimensional forms. More generally, given any form $q$ with maximal splitting that is not a Pfister neighbour, for all $n\in\mathbb{N}$ there exists an $n$-fold Pfister multiple of $q$ with maximal splitting that is not a Pfister neighbour.

%$q$ and $F$ as above and

%recovery

\begin{prop}\label{lowerbound} Let $q$ be an anisotropic form over a field $F$ that has maximal splitting but is not a Pfister neighbour. For $n\in\mathbb{N}$ and $\pi\simeq \<\!\< -x_1,\ldots ,-x_n\>\!\>$ over $F(\!(x_1)\!)\ldots (\!(x_n)\!)$, the form $\pi\otimes q$ has maximal splitting but is not a Pfister neighbour.
\end{prop}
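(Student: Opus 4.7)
The plan is to deduce the statement as a direct consequence of the two key results of Section~3: Proposition~\ref{maxtrans} and Proposition~\ref{pntrans}. Both results concern precisely the generic Pfister multiple $\pi\otimes q$ over $K=F(\!(x_1)\!)\ldots (\!(x_n)\!)$, and between them they transfer the maximal-splitting property and the Pfister-neighbour property across the extension.

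First I would note that the form $\pi\otimes q$ is anisotropic over $K$, as follows from iterated applications of Lemma~\ref{Hlemma} (precisely the argument already used in the proof of Proposition~\ref{i1trans}), so that the statements of Proposition~\ref{maxtrans} and Proposition~\ref{pntrans} apply. The ``maximal splitting'' half is then immediate: by the ``only if'' direction of Proposition~\ref{maxtrans} (equivalently, by iterating Corollary~\ref{maxsplcor}), $\pi\otimes q$ inherits maximal splitting from $q$.

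For the ``not a Pfister neighbour'' half, I would argue by contradiction. Suppose $\pi\otimes q$ is a Pfister neighbour over $K$. Proposition~\ref{pntrans}$(i)$ then forces $\pi\otimes q$ to be a neighbour of $\sigma\otimes\pi$ for some Pfister form $\sigma$ over $F$. Applying Proposition~\ref{pntrans}$(ii)$ with $\rho=\sigma$, this in turn forces $q$ to be a neighbour of $\sigma$ over $F$, contradicting the hypothesis that $q$ is not a Pfister neighbour. Hence $\pi\otimes q$ is not a Pfister neighbour over $K$, completing the proof.

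Since both building blocks are already in place, there is essentially no obstacle here; the content of the proposition lies entirely in Propositions~\ref{maxtrans} and~\ref{pntrans}, and what remains is a short two-line deduction. The ``hard part'', such as it is, amounts only to observing that one must combine both parts $(i)$ and $(ii)$ of Proposition~\ref{pntrans} rather than $(ii)$ alone, since we do not have a specific candidate Pfister form prescribed over $F$.
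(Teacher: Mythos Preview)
Your proposal is correct and matches the paper's own proof almost verbatim: the paper simply cites Corollary~\ref{maxsplcor} for the maximal-splitting part and Proposition~\ref{pntrans} for the Pfister-neighbour part. Your unpacking of how parts $(i)$ and $(ii)$ of Proposition~\ref{pntrans} combine is exactly the content the paper leaves implicit (and spells out in Remark~\ref{Hcontext}).
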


\begin{proof} We note that $\pi\otimes q$ has maximal splitting by Corollary~\ref{maxsplcor}. The fact that $\pi\otimes q$ is not a Pfister neighbour follows from Proposition~\ref{pntrans}.
\end{proof}

% field $F$ and  over $F$

For $F$, $q$ and $\pi$ as in Proposition~\ref{lowerbound}, let $m\in\mathbb{N}$ be such that $2^m < 2^n(\dim q)\leqs 2^{m+1}$. For $d\in\mathbb{N}$ such that $2^m< d\leqs 2^n(\dim q)$, every $d$-dimensional subform of $\pi\otimes q$ has maximal splitting but is not a Pfister neighbour. In particular, as observed in \cite[Proposition 1.5]{IV}, for $n \geqs 2$ and $d\in\mathbb{N}$ such that $2^n< d\leqs 2^{n}+2^{n-2}$, there exists a $d$-dimensional form with maximal splitting that is not a Pfister neighbour. The following conjecture of Izhboldin and Vishik posits that the dimensions of such forms necessarily belong to these intervals.

%{\textbf{\cite[\textbf{Conjecture 1.6}]{IV}.}} Let $F$ be a field and $q$ be an anisotropic form over $F$ with maximal splitting. If $2^{n}+2^{n-2} < \dim q\leqs 2^{n+1}$ holds for $n \geqs 2$, then $q$ is a Pfister neighbour.
%

%%%%%%%%%%%%%%%%%%%%%%%%%%%%%%%%%%%%%%%%%%%%%%%%%%%%%%%%%%%%%%%%%%%%%%%%%%%%%%%%%%%%%%%
%%%%%%%%%%%%%%%%%%%%%%%%%%%%%%%%%%%%%%%%%%%%%%%%%%%%%%%%%%%%%%%%%%%%%%%%%%%%%%%%%%%%%%%
%%%%%%%%%%%%%%%%%%%%%%%%%%%%%%%%%%%%%%%%%%%%%%%%%%%%%%%%%%%%%%%%%%%%%%%%%%%%%%%%%%%%%%%
%%%%%%%%%%%%%%%%%%%%%%%%%%%%%%%%%%%%%%%%%%%%%%%%%%%%%%%%%%%%%%%%%%%%%%%%%%%%%%%%%%%%%%%
%%%%%%%%%%%%%%%%%%%%%%%%%%%%%%%%%%%%%%%%%%%%%%%%%%%%%%%%%%%%%%%%%%%%%%%%%%%%%%%%%%%%%%%
%%%%%%%%%%%%%%%%%%%%%%%%%%%%%%%%%%%%%%%%%%%%%%%%%%%%%%%%%%%%%%%%%%%%%%%%%%%%%%%%%%%%%%%
%NEED TO BE CAREFUL WRT THE HARDWIRED REFERENCE IN THE FOLLOWING
%%%%%%%%%%%%%%%%%%%%%%%%%%%%%%%%%%%%%%%%%%%%%%%%%%%%%%%%%%%%%%%%%%%%%%%%%%%%%%%%%%%%%%%
%%%%%%%%%%%%%%%%%%%%%%%%%%%%%%%%%%%%%%%%%%%%%%%%%%%%%%%%%%%%%%%%%%%%%%%%%%%%%%%%%%%%%%%
%%%%%%%%%%%%%%%%%%%%%%%%%%%%%%%%%%%%%%%%%%%%%%%%%%%%%%%%%%%%%%%%%%%%%%%%%%%%%%%%%%%%%%%
%%%%%%%%%%%%%%%%%%%%%%%%%%%%%%%%%%%%%%%%%%%%%%%%%%%%%%%%%%%%%%%%%%%%%%%%%%%%%%%%%%%%%%%
%%%%%%%%%%%%%%%%%%%%%%%%%%%%%%%%%%%%%%%%%%%%%%%%%%%%%%%%%%%%%%%%%%%%%%%%%%%%%%%%%%%%%%%
%%%%%%%%%%%%%%%%%%%%%%%%%%%%%%%%%%%%%%%%%%%%%%%%%%%%%%%%%%%%%%%%%%%%%%%%%%%%%%%%%%%%%%%

{\textbf{[12, Conjecture 1.6].}} Let $F$ be a field and $q$ be an anisotropic form over $F$ with maximal splitting. If $2^{n}+2^{n-2} < \dim q\leqs 2^{n+1}$ holds for $n \geqs 2$, then $q$ is a Pfister neighbour.

%
%\cite[Conjecture 1.6]{IV}: Let $F$ be an arbitrary field, $n\geqs 3$ and $q$ be an anisotropic form over $F$ with maximal splitting. If $2^{n-1}+2^{n-3} < \dim q\leqs 2^n$ holds, then $q$ is a Pfister neighbour.
%
%This conjecture holds for $n\leqs 4$ (see \cite{Hspl} or \cite{Imax}). For $n\geqs 5$, the conjecture is known to hold when $2^n-7\leqs \dim q\leqs 2^n$ (see \cite[Theorem 1.7]{IV}).

%HOW TO REFERENCE CASES WHERE IT'S KNOWN? JUST REFER TO IV I GUESS

%As a result of Proposition~\ref{lowerbound}, 

\cite[Conjecture 1.6]{IV} appears to be very difficult to resolve. It is known to hold for $n\leqs 3$ (see \cite{Hspl} or \cite{Imax}). In order to establish the truth of this conjecture for a fixed value of $n\geqs 4$, we remark that it suffices to prove the statement in the (hardest) case where $q$ is any form of dimension $2^n+2^{n-2}+1$. More generally, one can look to prove \cite[Conjecture 1.6]{IV} with respect to forms $q$ of some prescribed dimension, an approach which has been successfully employed by a number of authors. For $n\geqs 4$, the conjecture is known to hold when $2^{n+1}-7\leqs \dim q\leqs 2^{n+1}$ (see \cite[Theorem 1.7]{IV}).

%for $n\leqs 3$ were established through the consideration of forms $q$ of prescribed dimension. 
%
%Another approach towards resolving \cite[Conjecture 1.6]{IV} is to seek to establish it with respect to forms $q$ of prescribed dimension. 
%
%For $n\geqs 4$, the conjecture is known to hold when $2^{n+1}-7\leqs \dim q\leqs 2^{n+1}$ (see \cite[Theorem 1.7]{IV}).
%
%
%
%The problem of proving \cite[Conjecture 1.6]{IV} for forms $q$ of fixed dimension

We remark that Proposition~\ref{lowerbound} is also of some relevance to these approaches towards resolving \cite[Conjecture 1.6]{IV}. In particular, in order to establish the conjecture with respect to a form $q$, Proposition~\ref{lowerbound} implies that it is sufficient to prove the statement with respect to an $m$-fold generic Pfister multiple of $q$ for any prescribed $m\in\mathbb{N}$. Thus, it suffices to prove the conjecture with respect to the forms belonging to any prescribed power of the fundamental ideal (generated by even-dimensional forms). Hence, when treating the general conjecture, there is no loss of generality in assuming that the first $m$ cohomological invariants of $q$ are trivial. The same considerations apply when seeking to establish the conjecture with respect to forms of prescribed dimension. Thus, to prove the conjecture with respect to $24$-dimensional forms for example, it suffices to prove the statement with respect to $48$-dimensional forms with trivial discriminant (although this is unlikely to be easier).

%We remark that it is possible to 

%such that $2^{n+1}-10\leqs \dim q\leqs 2^{n+1}-8$

%By invoking descent results of Laghribi, one can establish that \cite[Conjecture 1.6]{IV} also holds with respect certain forms with special properties: 

We conclude our considerations of \cite[Conjecture 1.6]{IV} by invoking descent results of Laghribi to establish the conjecture with respect to forms with specified properties.

%whose dimension lies in the range $(2^{n+1}-10, 2^{n+1}-8)$ for $n\geqs 4$.

%\begin{prop}\label{implicit} For $n\geqs 4$, let $q$ be an anisotropic form over $F$ of dimension at least $2^{n+1}-10$. Suppose that $q$ contains a subform $p$ of one of the following types: \begin{enumerate}[$(i)$] \item $\dim p=2^{n+1}-10$, $\det p=-1$ and the Clifford algebra of $p$ can be represented by a quaternion algebra, \item $\dim p=2^{n+1}-9$ and the even Clifford algebra of $p$ can be represented by a quaternion algebra, \item $\dim p=2^{n+1}-8$ and the Clifford algebra of $p$ extended to $F(p)(\sqrt{\det p})$ can be represented by a quaternion algebra.\end{enumerate} If $q$ has maximal splitting, then $q$ is a Pfister neighbour.
%\end{prop}

\begin{prop}\label{implicit} For $n\geqs 4$, let $q$ be an anisotropic form over $F$ of dimension at least $2^{n+1}-10$. Suppose that $q$ contains a subform $p$ of one of the following types: \begin{enumerate}[$(i)$] \item $\dim p=2^{n+1}-10$, $\det p=-1$ and the Clifford algebra of $p$ has Schur index at most two, \item $\dim p=2^{n+1}-9$ and the even Clifford algebra of $p$ has Schur index at most two, \item $\dim p=2^{n+1}-8$ and the Clifford algebra of $p$ extended to $F(p)(\sqrt{\det p})$ has Schur index at most two.\end{enumerate} If $q$ has maximal splitting, then $q$ is a Pfister neighbour.
\end{prop}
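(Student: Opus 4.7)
The plan is to use Laghribi's descent theorems on the subform $p$ as a source for the candidate Pfister form, then transfer the neighbour status from $p$ up to $q$ via the maximal-splitting hypothesis.

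I would first recognise each of $(i)$, $(ii)$, $(iii)$ as the exact input of a specific descent result of Laghribi characterising Pfister neighbours of an $(n+1)$-fold Pfister form at codimensions $10$, $9$, and $8$ respectively. In each case the corresponding theorem applied to $p$ produces an anisotropic $(n+1)$-fold Pfister form $\pi$ of dimension $2^{n+1}$ over $F$ such that $p$ is a Pfister neighbour of $\pi$.

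Next, I would transfer this status to $q$. The proposition is framed in the setting of \cite[Conjecture 1.6]{IV}, so we have $\dim q \leqs 2^{n+1}$. Combined with $\dim q \geqs 2^{n+1}-10 > 2^n$ (valid for $n \geqs 4$) and the maximal-splitting condition $\dim q - i_1(q) = 2^m$, this forces $m=n$, hence $\dim q - i_1(q) = 2^n$. Therefore $\dim p \geqs 2^{n+1}-10 > 2^n = \dim q - i_1(q)$, and Lemma~\ref{star} forces $p$ to be isotropic over $F(q)$. Since $p$ is a Pfister neighbour of $\pi$, it follows that $\pi_{F(q)}$ is hyperbolic. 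Finally, a standard Pfister-neighbour criterion (cf.\ \cite[Ch.X]{LAM}) says that an anisotropic Pfister form $\pi$ of dimension $2^{n+1}$ becoming hyperbolic over $F(q)$ for an anisotropic $q$ of dimension strictly greater than $\tfrac{1}{2}\dim\pi = 2^n$ forces $q$ to be a Pfister neighbour of $\pi$; applying this completes the argument. (Alternatively, one may combine the equality $\dim\pi - i_1(\pi) = 2^n = \dim q - i_1(q)$ with Theorem~\ref{km}(ii) to deduce that $q$ is isotropic over $F(\pi)$, and then conclude via the same standard criterion.)

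The principal obstacle lies in the first step: matching each of the three Clifford-invariant conditions with the corresponding Laghribi descent result. The three hypotheses concern genuinely distinct invariants --- the full Clifford algebra with prescribed signed determinant, the even Clifford algebra, and the Clifford algebra after a quadratic extension of $F(p)$ --- each keyed to a different published theorem, so verifying this dictionary and confirming that each produces the same sort of $(n+1)$-fold Pfister form $\pi$ is where the substantive reference work lies. Once $\pi$ is in hand, the transfer via Lemma~\ref{star} and the concluding Pfister-neighbour characterisation are routine.
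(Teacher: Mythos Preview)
Your second step (transferring the Pfister-neighbour property from $p$ to $q$ via Lemma~\ref{star} and the hyperbolicity criterion) is sound, and is essentially how the paper concludes once it knows $p$ is a Pfister neighbour. The gap is in your first step.

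Laghribi's cited result \cite[Th\'eor\`eme principal]{Lag} is not a characterisation of Pfister neighbours; it is a \emph{descent} theorem asserting that, under suitable invariant conditions, a low-dimensional form over $F(p)$ is defined over $F$. The conditions $(i)$--$(iii)$ on $p$ are designed so that the \emph{first kernel} $(p_{F(p)})_{\mathrm{an}}$, once it is known to have dimension $10$, $9$, or $8$ respectively, satisfies the hypotheses of that descent result; descent of the first kernel then yields that $p$ is a Pfister neighbour via Knebusch \cite[Theorem~7.13]{Kn2}. But the Clifford conditions alone do not force $\dim(p_{F(p)})_{\mathrm{an}}\in\{8,9,10\}$: a form of dimension $2^{n+1}-10$ satisfying the invariant condition in $(i)$ can perfectly well have $i_1(p)=1$, giving a kernel of dimension $2^{n+1}-12$, far outside Laghribi's range. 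What pins down the kernel dimension is precisely the maximal splitting of $p$, and that is not a hypothesis on $p$ --- it must be \emph{deduced} from the maximal splitting of $q$.

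So the argument has to run in the opposite order from what you propose: first use $\dim p>2^n=\dim q-i_1(q)$ and Lemma~\ref{star} to get $p$ and $q$ isotropy equivalent, then invoke Theorem~\ref{km}$(ii)$ to obtain $\dim p-i_1(p)=2^n$ (i.e.\ $p$ has maximal splitting), and only then does the first kernel have the correct dimension for Laghribi's descent plus Knebusch to apply. Your ingredients are all present, but the dependency is inverted, and as written the appeal to Laghribi in step one is not justified.
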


%RE $(iii)$: I GUESS THE INDEX OF THE CLIFFORD ALGEBRA WILL NOT GO DOWN AFTER EXTENDING TO $F(p)$. THUS, I CAN PROBABLY STATE THE RESULT IN TERMS OF THE INDEX OVER $F$.
%
%MIGHT BE WORTHWHILE NOTING THAT NEITHER DET NOR IND CHANGES OVER THE EXTENSION $F(p)$
%
%NOT WORTH THE COMPLICATIONS

\begin{proof} Assuming that $q$ has maximal splitting, we have that $q$ and $p$ are isotropy equivalent by Lemma~\ref{star}, whereby $p$ has maximal splitting by Theorem~\ref{km}$(ii)$. We consider the extension of $p$ to $F(p)$. Invoking \cite[Th\'eor\`eme principal]{Lag}, we have that $(p_{F(p)})_\text{an}$ is defined over $F$. Hence, we have that $p$ is a Pfister neighbour, by \cite[Theorem $7.13$]{Kn2}, whereby it follows that $q$ is a Pfister neighbour.
\end{proof}

As per Proposition~\ref{i1trans}, the value of the first Witt index of a form is reflected in that of its generic Pfister multiples. In \cite{Hspl}, a complete classification of the splitting patterns of forms of dimension at most $9$ was given. For $q$ an even-dimensional form of dimension at least four and at most eight, it is known that $i_1(q)$ is divisible by two if and only if $q$ is a multiple of a $1$-fold Pfister form. In \cite[Proof of Conjecture $0.10$]{I}, Izhboldin proved that a $10$-dimensional form $q$ satisfies $i_1(q)=2$ if and only if $q$ is a multiple of a $1$-fold Pfister form or $q$ is a Pfister neighbour. As per \cite{Hspl} or \cite{Imax}, a $12$-dimensional form $q$ satisfies $i_1(q)=4$ if and only if $q$ is a Pfister neighbour, in which case $q$ is a multiple of a $2$-fold Pfister form. As per \cite[pp. $94$-$95$]{IKKV}, Vishik established that a $12$-dimensional form $q$ satisfies $i_1(q)=2$ if and only if its splitting pattern is of the form $(2,4)$ (in which case it is a multiple of a $2$-fold Pfister form) or of the form $(2,2,2)$ (with Vishik hypothesising that $q$ is a multiple of a $1$-fold Pfister form in this case). Totaro classified $14$-dimensional forms with first Witt index greater than one in \cite[Theorem 4.2]{Totaro}, determining that such a form $q$ satisfies $i_1(q)=2$ if and only if $q$ is a multiple of a $1$-fold Pfister form or $q$ is a subform of a $16$-dimensional multiple of a $2$-fold Pfister form. 

Thus, assuming Vishik's hypothesis is true, we have that an even-dimensional form $q$ of dimension less than $16$ satisfies $i_1(q)=2$ if and only if $q$ is isotropy equivalent to a multiple of a Pfister form. Vishik showed that this phenomenon does not extend further by constructing a $16$-dimensional form $q$ satisfying $i_1(q)=2$ that is not a multiple of a $2$-fold Pfister form. To our knowledge, Vishik's example is the first example of a form having non-trivial first Witt index that is not isotropy equivalent to a multiple of a Pfister form. Vishik's form, having splitting pattern $(2,2,2,2)$, is the first example of a form whose higher Witt indices are all even but is not a multiple of a Pfister form (the converse holds as a consequence of Theorem~\ref{WS}).

% Keep in mind that twisted Pfister forms are multiples of pfister forms.

% first and

\begin{example}\label{vishikform} As presented in \cite[Lemma 7.1]{Totaro}, Vishik established that, over the field $K=F(x_1,\ldots ,x_5)$, the $16$-dimensional anisotropic form $$q\simeq \<\!\<-x_1,-x_2,-x_3\>\!\>\perp x_4\< 1,x_1,x_2,x_3\>\perp x_5\< 1,x_1,x_2,x_3\>$$ satisfies $i_1(q)=2$ but is not a multiple of a $1$-fold Pfister form. Over the field $L=K(\!(y_1)\!)\ldots (\!(y_n)\!)$ for $n\in\mathbb{N}\cup\{ 0\}$, let $\pi\simeq \<\!\< -y_1,\ldots ,-y_n\>\!\>$ and consider the $2^{n+4}$-dimensional form $q\otimes\pi$. As $i_1(q)=2$, it follows that $i_1(q\otimes\pi)=2^{n+1}$, in accordance with Proposition~\ref{i1trans}. Moreover, as $q$ is not a multiple of a $1$-fold Pfister form over $K$, we have that $q\otimes \pi$ is not a multiple of an $(n+1)$-fold Pfister form over $L$, as follows from $n$ invocations of Proposition~\ref{divtrans}.
\end{example}

%$q\otimes\pi$ be the $2^{n+4}$-dimensional form considered in Example~\ref{vishikform}. Let $L=K(\!(y_1)\!)\ldots (\!(y_n)\!)$

%\begin{example}\label{vishiksubform} For any $n\in\mathbb{N}\cup\{ 0\}$, let $q$, $\pi$ and $L$ be as in Example~\ref{vishikform}. Over the field $L(\!(z_1

\begin{example}\label{vishiksubform} Let $p$ be a $14$-dimensional subform of the form $$q\simeq \<\!\<-x_1,-x_2,-x_3\>\!\>\perp x_4\< 1,x_1,x_2,x_3\>\perp x_5\< 1,x_1,x_2,x_3\>$$ over $K=F(x_1,\ldots ,x_5)$. Consider the $30$-dimensional form $\psi\simeq q\perp yp$ over $K(\!(y)\!)$. As $\psi\subset q\otimes \< 1,y\>$, it follows that $i_1(\psi)=2$, since $i_1(q\otimes \< 1,y\>)=4$ by Example~\ref{vishikform}. Suppose, for the sake of contradiction, that $\psi$ is a multiple of a $1$-fold Pfister form $\rho$ over $K(\!(y)\!)$. Hence $\psi\simeq\rho\otimes\gamma$ for some form $\gamma$ over $K(\!(y)\!)$, with $\gamma\simeq \gamma_1\perp y\gamma_2$ for $\gamma_1$, $\gamma_2$ forms over $K$. As before, we have that $\rho\simeq \< 1,a\>$ or $\rho\simeq \< 1,ay\>$ for some $a\in K^{\times}$. For $\rho\simeq \< 1,a\>$, it follows that $q\simeq \< 1,a\>\otimes \gamma_1$, in contradiction to \cite[Lemma 7.1]{Totaro}. For $\rho\simeq \< 1,ay\>$, it follows that $q\simeq\gamma_1\perp a\gamma_2$ and $p\simeq \gamma_2\perp a\gamma_1$, in contradiction to the fact that $\dim p\neq\dim q$. Hence, $\psi$ is a $30$-dimensional form over $K(\!(y)\!)$ satisfying $i_1(\psi)=2$ that is not a multiple of a $1$-fold Pfister form. By iterating this argument, it follows that, for any $n\in\mathbb{N}$, there exists a form $\varphi$ of dimension $2^{n+4}-2^{n}$ satisfying $i_1(\varphi)=2$ that is not a multiple of a $1$-fold Pfister form.
\end{example}

%By iterating this argument, one may construct 

%$p\subsetneq q$.

%consider the $32$-dimensional form $q\otimes \< 1,y\>$ over $K(\!(y)\!)$. As per Example~\ref{vishikform}, we have that $i_1(q\otimes \< 1,y\>)=4$ but $q\otimes \< 1,y\>$ is not a multiple of a $2$-fold Pfister form over $K(\!(y)\!)$. For $p$ a $14$-dimensional subform of $q$ over $K$, consider the form $\psi\simeq q\perp yp$ over $K(\!(y)\!)$.

%NEED TO RECHECK ALL DIMENSION STATEMENTS!!!!
%
%
%% $\subset q\otimes \< 1,y\>$
%NEEDS WORK:

\begin{question}\label{vishikq} For each $n\geq 4$, what is the least positive integer $k(n)$ such that there exists a form $q$ of dimension $2^n+k(n)$ satisfying $i_1(q)=2$ that is not a multiple of a $1$-fold Pfister form?
\end{question}

In accordance with Example~\ref{vishiksubform}, we have that $k(n)\leqs 2^n-2^{n-3}$ for every $n\geq 4$. In light of this, it would be interesting to determine whether there exists a form $q$, satisfying the dimension condition $2^n< \dim q < 2^{n+1}-2^{n-3}$ for some $n\geq 4$, that has non-trivial first Witt index but is not isotropy equivalent to a multiple of a Pfister form.

{\emph{Acknowledgements.} I gratefully acknowledge the support I received through an International Mobility Fellowship from the Irish Research Council, co-funded by Marie Curie Actions under FP7. I thank Sylvain Roussey for making his comprehensive PhD thesis available to me.

\end{document}